\newtheorem{lemma}{Lemma}[section]
\newtheorem{theorem}[lemma]{Theorem}
\newtheorem{proposition}[lemma]{Proposition}
\newtheorem{conjecture}[lemma]{Conjecture}
\newtheorem{corollary}[lemma]{Corollary}
\theoremstyle{definition}
\newtheorem{definition}[lemma]{Definition}
\newtheorem{remark}[lemma]{Remark}
\numberwithin{equation}{section}
\numberwithin{figure}{section}
\newcommand{\Cset}{\mathcal{C}}
\newcommand{\Lset}{\mathcal{L}}
\newcommand{\Xset}{\mathcal{X}}
\newcommand{\Yset}{\mathcal{Y}}
\begin{document}

\title{\huge On the usage of lines in $GC_n$ sets}         
\author{Hakop Hakopian, Vahagn Vardanyan}        
\date{}          

\maketitle

\begin{abstract}
A planar node set $\mathcal X,$ with $|\mathcal X|=\binom{n+2}{2}$ is called  $GC_n$ set if each node possesses fundamental polynomial in form of a product of $n$ linear factors. We say that a node uses a line $Ax+By+C=0$ if $Ax+By+C$ divides the fundamental polynomial of the node.  A line is called $k$-node line if it passes through
exactly $k$-nodes of $\mathcal X.$ At most $n+1$ nodes can be collinear in $GC_n$ sets and an $(n+1)$-node line is called maximal line. The Gasca - Maeztu conjecture (1982) states that every $GC_n$ set has a maximal line. Until now the conjecture has been
proved only for the cases $n \le 5.$
Here we adjust and prove a conjecture proposed in the paper - V. Bayramyan, H. H.,
Adv Comput Math,  43: 607-626, 2017.
Namely, by assuming that the Gasca-Maeztu conjecture is true, we prove that for any $GC_n$ set $\mathcal X$ and any $k$-node line $\ell$ the following statement holds:

\noindent Either the line $\ell$ is not used at all, or it is used
by exactly $\binom{s}{2}$ nodes of $\mathcal X,$ where $s$ satisfies the condition $\sigma:=2k-n-1\le s\le k.$
If in addition $\sigma \ge 3$ and $\mu(\mathcal X)>3$ then the first case here is excluded, i.e., the line $\ell$ is necessarily a used line. Here $\mu(\mathcal X)$ denotes the number of maximal lines of $\mathcal X.$

At the end, we bring a characterization for the usage of $k$-node lines in $GC_n$ sets when $\sigma=2$ and $\mu(\mathcal X)>3.$
\end{abstract}

{\bf Key words:} Polynomial interpolation, Gasca-Maeztu conjecture,
$n$-poised set, $n$-independent set, $GC_n$ set, fundamental
polynomial, maximal line.

{\bf Mathematics Subject Classification (2010):} \\
41A05, 41A63.

\section{Introduction\label{sec:intro}}
An $n$-poised set $\mathcal X$ in the plane is a node set for which the interpolation problem with bivariate polynomials of total degree at most $n$ is unisolvent.
Node sets with geometric characterization: $GC_n$ sets, introduced by Chang and Yao \cite{CY77}, form an important subclass of $n$-poised sets. In a $GC_n$ set the fundamental polynomial of each node is a product of n linear factors. We say  that a node uses a line if the line is a factor of the fundamental polynomial of this node. A line is called $k$-node line if it passes through
exactly $k$-nodes of $\mathcal X.$ It is a simple fact that at most $n+1$ nodes can be collinear in $GC_n$ sets. An $(n+1)$-node line is called a maximal line. The conjecture of M. Gasca and J. I. Maeztu \cite{GM82} states that every $GC_n$ set has a maximal line. Until now the conjecture has been
proved only for the cases $n \le 5$ (see \cite{B90} and \cite{HJZ14}).  For a maximal line $\lambda$ in a $GC_n$ set $\mathcal X$ the following statement is evident:
the line $\lambda$ is used by all $\binom{n+1}{2}$ nodes in $\mathcal
X\setminus \lambda.$ This immediately follows from the fact that if a polynomial of total degree at most $n$ vanishes at $n+1$ points of a line then the line divides the polynomial (see Proposition \ref{prp:n+1points}, below).
Here we consider a conjecture proposed in the paper \cite{BH} by V. Bayramyan and H. H.,
concerning the usage of any $k$-node line of $GC_n$ set.
In this paper we make a correction in the mentioned conjecture and then prove it.
Namely, by assuming that the Gasca-Maeztu conjecture is true, we prove that for any $GC_n$ set $\mathcal X$ and any $k$-node line $\ell$ the following statement holds:

\noindent The line $\ell$ is not used at all, or it is used
by exactly $\binom{s}{2}$ nodes of $\mathcal X,$ where $s$ satisfies the condition $\sigma=\sigma(\Xset,\ell):=2k-n-1\le s\le k.$
If in addition $\sigma \ge 3$ and $\mu(\mathcal X)>3$ then the first case here is excluded, i.e., the line $\ell$ is necessarily a used line. Here $\mu(\mathcal X)$ denotes the number of maximal lines of $\mathcal X.$
We prove also that the subset of nodes of $\mathcal X$ that use the line $\ell$ forms a $GC_{s-2}$ set if it is not an empty set. Moreover we prove that actually it is a $\ell$-proper subset of $\mathcal X,$ meaning that it can be obtained from $\mathcal X$ by subtracting the nodes in subsequent maximal lines, which do not intersect the line $\ell$ at a node of $\Xset$ or the nodes in pairs of maximal lines intersecting $\ell$ at the same node of $\Xset.$ At the last step, when the line $\ell$ becomes maximal, the nodes in $\ell$ are subtracted (see the forthcoming Definition \ref{def:proper}).

At the end, we bring a characterization for the usage of $k$-node lines in $GC_n$ sets when $\sigma=2$ and $\mu(\mathcal X)>3.$

Let us mention that earlier Carnicer and Gasca proved that a $k$-node line $\ell$ can be used by atmost $\binom{k}{2}$ nodes of a $GC_n$ set $\mathcal X$ and in addition there are no $k$ collinear nodes that use $\ell$, provided that GM conjecture is true (see \cite{CG03}, Theorem 4.5).

\subsection{Poised sets}

Denote by $\Pi_n$ the space of bivariate polynomials of total degree
at most $n:$
\begin{equation*}
\Pi_n=\left\{\sum_{i+j\leq{n}}c_{ij}x^iy^j
\right\}.
\end{equation*}
We have that
\begin{equation} \label{N=}
N:=\dim \Pi_n=\binom{n+2}{2}.
\end{equation}
Let $\Xset$ be a set of $s$ distinct nodes (points):
\begin{equation*}
{\mathcal X}={\mathcal X}_s=\{ (x_1, y_1), (x_2, y_2), \dots , (x_s, y_s) \} .
\end{equation*}
The Lagrange bivariate interpolation problem is: for given set of values
$\Cset_s:=\{ c_1, c_2, \dots , c_s \}$ find a polynomial $p \in \Pi_n$ satisfying
the conditions
\begin{equation}\label{int cond}
p(x_i, y_i) = c_i, \ \ \quad i = 1, 2, \dots s.
\end{equation}

\begin{definition}
A set of nodes ${\mathcal X}_s$ is called \emph{$n$-poised} if
for any set of values $\Cset_s$ there exists a unique polynomial
$p \in \Pi_n$ satisfying the conditions \eqref{int cond}.
\end{definition}
It is an elementary Linear Algebra fact that if a node set $\Xset_s$ is $n$-poised then $s=N.$ Thus from now on we will consider sets $\Xset=\Xset_N$ when
$n$-poisedness is studied. If a set $\Xset$ is $n$-poised then we say that $n$ is the degree of the set $\Xset.$

\begin{proposition} \label{prp:poised}
The set of nodes ${\mathcal X}_N$ is $n$-poised if and only if the
following implication holds:
$$p \in \Pi_n,\ p(x_i,
y_i) = 0, \quad i = 1, \dots , N \Rightarrow p = 0.$$
\end{proposition}

A polynomial $p \in \Pi_n$ is called an \emph{$n$-fundamental
polynomial} for a node $ A = (x_k, y_k) \in {\mathcal X}_s,\ 1\le k\le s$,  if
\begin{equation*}
p(x_i, y_i) = \delta _{i k}, \  i = 1, \dots , s ,
\end{equation*}
where $\delta$ is the Kronecker symbol.

Let us denote the
$n$-fundamental polynomial of the node $A \in{\mathcal X}_s$ by $p_A^\star=p_{A, {\mathcal X}}^\star.$

A
polynomial vanishing at all nodes but one is also called fundamental, since it is a nonzero
constant times the fundamental polynomial.

\begin{definition}
Given an $n$-poised set $ {\mathcal X}.$ We say that a node
$A\in{\mathcal X}$ \emph{uses a line $\ell\in \Pi_1$,} if
\begin{equation*}
  p_A^\star = \ell q, \ \text{where} \ q\in\Pi_{n-1}.
\end{equation*}
\end{definition}
The following proposition is well-known (see, e.g., \cite{HJZ09b}
Proposition 1.3):
\begin{proposition}\label{prp:n+1points}
Suppose that a polynomial $p \in
\Pi_n$ vanishes at $n+1$ points of a line $\ell.$ Then we have that
\begin{equation*}
p = \ell r, \ \text{where} \ r\in\Pi_{n-1}.
\end{equation*}
\end{proposition}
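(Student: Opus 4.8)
The plan is to reduce to a normalized position by an affine change of coordinates and then finish by elementary one-variable reasoning. First I would recall that any invertible affine transformation $T$ of the plane induces a linear bijection of $\Pi_n$ onto itself via $p\mapsto p\circ T$, which preserves total degree and carries lines to lines. Hence it suffices to prove the claim after replacing $\ell$ by its image under a suitably chosen $T$, and I would pick $T$ so that $\ell$ becomes the coordinate line $y=0$. Under this map the $n+1$ given points of $\ell$ go to $n+1$ distinct points of the $x$-axis, and divisibility of the original polynomial by the factor $\ell$ is equivalent, after transforming, to divisibility of the image polynomial by $y$.

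Next I would restrict the transformed polynomial to the line. Denoting the image polynomial again by $p$, the univariate polynomial $x\mapsto p(x,0)$ has degree at most $n$ and vanishes at the $n+1$ distinct abscissae of the images of the given points; since a nonzero univariate polynomial of degree at most $n$ has at most $n$ roots, it must vanish identically, so $p(x,0)\equiv 0$.

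Finally I would extract the factor $y$. Expanding $p(x,y)=\sum_{i+j\le n}c_{ij}x^iy^j$ and separating the terms with $j=0$ gives $p(x,y)=p(x,0)+y\,r(x,y)$, where each monomial $x^iy^{j-1}$ appearing in $r$ has total degree $i+j-1\le n-1$, so $r\in\Pi_{n-1}$. Because $p(x,0)\equiv 0$ we obtain $p=y\,r$, and transforming back by $T^{-1}$ yields $p=\ell\,r$ with $r\in\Pi_{n-1}$, as required.

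The argument presents no genuine obstacle, being a standard fact; the only points that demand care are bookkeeping ones. One must verify that the affine substitution preserves total degree, so that $\Pi_n$ is mapped onto itself and the degree of the cofactor is controlled, and that peeling off the single factor $y$ lowers the total degree by exactly one, which secures the sharp bound $r\in\Pi_{n-1}$ rather than merely $r\in\Pi_n$. An alternative avoiding coordinates would be to perform polynomial division of $p$ by the degree-one polynomial $\ell$ regarded as a polynomial in one of the variables, but tracking the total degree of the quotient is then slightly more delicate, so I prefer the normalization above.
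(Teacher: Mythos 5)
Your argument is correct and complete: the affine normalization of $\ell$ to $y=0$, the vanishing of the univariate restriction $p(x,0)$ at $n+1$ distinct points, and the decomposition $p(x,y)=p(x,0)+y\,r(x,y)$ with $r\in\Pi_{n-1}$ together give exactly the standard proof of this fact. The paper itself supplies no proof, only a citation to the literature where it is established as well known, so there is nothing to compare beyond noting that your write-up is the expected argument and the degree bookkeeping you flag is handled properly.
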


\noindent Thus at most $n+1$ nodes of an $n$-poised set $\Xset$ can be collinear.
A line $\lambda$ passing through $n+1$
nodes of the set ${\mathcal X}$ is called a \emph{maximal line}.
Clearly, in view of Proposition \ref{prp:n+1points}, any maximal line $\lambda$ is used by all the nodes in $\Xset\setminus \lambda.$

Below we bring other properties of maximal lines:
\begin{corollary}[\cite{CG00}, Prop. 2.1]\label{properties}
Let ${\mathcal X}$ be an $n$-poised set. Then we have that
\vspace{-1.5mm} \begin{enumerate} \setlength{\itemsep}{0mm}
\item
Any two maximal lines of $\Xset$ intersect necessarily at a node of $\Xset;$
\item
Any three maximal lines of $\Xset$ cannot be concurrent;
\item
$\Xset$ can have at most $n+2$ maximal lines.
\end{enumerate}
\end{corollary}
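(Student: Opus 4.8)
The plan is to derive all three statements from a single mechanism: repeated application of the division principle in Proposition~\ref{prp:n+1points} (in the form that a polynomial of degree at most $d$ vanishing at $d+1$ collinear points is divisible by the line through them) to the fundamental polynomials $p_A^\star$ of suitably chosen nodes, reserving a short incidence count for part~(iii).

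For part~(i), I would argue by contradiction. Suppose two maximal lines $\lambda$ and $\lambda'$ share no node of $\Xset$. Pick any node $A\in\lambda$; then $A\notin\lambda'$, and since $p_A^\star$ vanishes at the $n+1$ nodes of $\lambda'$, Proposition~\ref{prp:n+1points} gives $p_A^\star=\lambda' q$ with $q\in\Pi_{n-1}$. Because $\lambda$ and $\lambda'$ meet in no node, the $n$ nodes of $\lambda\setminus\{A\}$ all lie off $\lambda'$, so $q$ must vanish at these $n$ collinear points; as $\deg q\le n-1$ this forces $\lambda\mid q$. Hence $\lambda\lambda'\mid p_A^\star$, so $p_A^\star(A)=0$, contradicting $p_A^\star(A)=1$.

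For part~(ii), suppose $\lambda_1,\lambda_2,\lambda_3$ pass through a common point $P$, which by part~(i) is a node. I would fix a node $B\in\lambda_1\setminus\{P\}$; then $B$ lies off $\lambda_2$ and off $\lambda_3$, so $p_B^\star$ vanishes on all $n+1$ nodes of each of $\lambda_2,\lambda_3$, whence $\lambda_2\mid p_B^\star$ and $\lambda_3\mid p_B^\star$. Since the two lines are distinct, hence coprime, $\lambda_2\lambda_3\mid p_B^\star$, say $p_B^\star=\lambda_2\lambda_3 s$ with $s\in\Pi_{n-2}$. The $n-1$ nodes of $\lambda_1$ other than $P$ and $B$ lie off $\lambda_2\cup\lambda_3$, since $\lambda_1$ meets each of $\lambda_2,\lambda_3$ only at $P$, so $s$ vanishes at these $n-1$ collinear points; as $\deg s\le n-2$ this yields $\lambda_1\mid s$ and therefore $\lambda_1\lambda_2\lambda_3\mid p_B^\star$, giving the contradiction $p_B^\star(B)=0$.

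Finally, part~(iii) follows by counting incidences on one line. Let $\lambda_1,\dots,\lambda_m$ be the maximal lines and fix $\lambda_1$. By part~(i) each $\lambda_1\cap\lambda_j$ (for $2\le j\le m$) is a node of $\Xset$, and by part~(ii) these intersection nodes are pairwise distinct, since a coincidence $\lambda_1\cap\lambda_j=\lambda_1\cap\lambda_k$ would make $\lambda_1,\lambda_j,\lambda_k$ concurrent. Thus $\lambda_1$ contains $m-1$ distinct nodes, and as it carries exactly $n+1$ nodes we get $m-1\le n+1$, i.e.\ $m\le n+2$. I expect the only delicate point to be the degree bookkeeping in parts~(i) and~(ii): one must track that after dividing out one maximal line the quotient has degree exactly one less, so that the remaining collinear zeros (precisely $n$, respectively $n-1$, of them) suffice to trigger the division principle a second time. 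The extremal low-degree cases, where $\Pi_{n-2}$ degenerates for small $n$, should be checked separately but are immediate, as the quotient is then forced to vanish identically.
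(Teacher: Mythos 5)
The paper does not prove this corollary at all: it is imported verbatim from Carnicer--Gasca \cite{CG00} as a cited fact, so there is no internal proof to compare against. Your argument is correct and complete on its own terms: for (i) and (ii) the existence of $p_A^\star$ in an $n$-poised set together with repeated use of Proposition~\ref{prp:n+1points} (at degrees $n$, $n-1$ and $n-2$, with the point counts $n+1$, $n$, $n-1$ matching the degree drops exactly) forces the forbidden divisibility $p_A^\star(A)=0$, and for (iii) the incidence count $m-1\le n+1$ on a single maximal line is exactly right; the low-degree degeneration you flag is indeed harmless since divisibility by a product of degree exceeding $n$ already forces the fundamental polynomial to vanish identically. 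This is essentially the classical argument from \cite{CG00}, so nothing is gained or lost relative to the source --- but it is a valid, self-contained substitute for the citation.
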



\section{$GC_n$ sets and the Gasca-Maeztu conjecture \label{ss:GMconj}}

Now let us consider a special type of $n$-poised sets satisfying a geometric characterization (GC) property introduced by K.C. Chung and T.H. Yao:
\begin{definition}[\cite{CY77}]
An n-poised set ${\mathcal X}$ is called \emph{$GC_n$ set} (or $GC$ set)
 if  the
$n$-fundamental polynomial of each node $A\in{\mathcal X}$ is a
product of $n$  linear factors.
\end{definition}
\noindent Thus, $GC_n$ sets are the sets each node of
which uses exactly $n$ lines.

\begin{corollary}[\cite{CG03}, Prop. 2.3] \label{crl:minusmax}
Let $\lambda$ be a maximal line of  a $GC_n$ set ${\mathcal X}.$  Then the set  ${\mathcal X}\setminus \lambda$ is a $GC_{n-1}$ set.
Moreover, for any node $A\in \mathcal X\setminus \lambda$ we have that
\begin{equation}\label{aaaaa}p_{A, {\mathcal X}}^\star= \lambda p_{A, {\{\mathcal X\setminus \lambda}\}}^\star.\end{equation}
\end{corollary}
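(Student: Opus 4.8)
The plan is to derive both claims at once from the divisibility principle of Proposition~\ref{prp:n+1points} together with the poisedness criterion of Proposition~\ref{prp:poised}. First I would record that, since $\lambda$ is maximal and hence carries exactly $n+1$ nodes,
\[
|\Xset\setminus\lambda|=\binom{n+2}{2}-(n+1)=\binom{n+1}{2}=\dim\Pi_{n-1},
\]
which is the correct cardinality for an $(n-1)$-poised set.

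Next I would prove that $\Xset\setminus\lambda$ is $(n-1)$-poised by verifying the implication of Proposition~\ref{prp:poised}. Suppose $q\in\Pi_{n-1}$ vanishes at every node of $\Xset\setminus\lambda$. Then $\lambda q\in\Pi_n$ vanishes at all $N$ nodes of $\Xset$, since the factor $\lambda$ annihilates the $n+1$ nodes on $\lambda$ while $q$ annihilates the rest. As $\Xset$ is $n$-poised this forces $\lambda q\equiv0$, and since $\lambda\not\equiv0$ we conclude $q\equiv0$. Thus $\Xset\setminus\lambda$ is $(n-1)$-poised, so each of its nodes possesses a unique fundamental polynomial in $\Pi_{n-1}$.

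Then I would identify those fundamental polynomials and read off the $GC$ property. Fix $A\in\Xset\setminus\lambda$; by the $GC_n$ hypothesis $p_{A,\Xset}^\star$ is a product of $n$ linear factors, and it vanishes at the $n+1$ nodes on $\lambda$, so Proposition~\ref{prp:n+1points} yields $p_{A,\Xset}^\star=\lambda r$ with $r\in\Pi_{n-1}$ a product of the remaining $n-1$ linear factors. Evaluating $r=p_{A,\Xset}^\star/\lambda$ on $\Xset\setminus\lambda$ shows it vanishes at every node there except $A$ and is nonzero at $A$; by the uniqueness just obtained, $r$ is a scalar multiple of $p_{A,\Xset\setminus\lambda}^\star$. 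Hence $p_{A,\Xset\setminus\lambda}^\star$ factors into $n-1$ lines, which is exactly the $GC_{n-1}$ property, and rescaling produces the stated identity~\eqref{aaaaa}.

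I do not expect a genuine obstacle here, since the argument rests entirely on the two cited propositions; the only point needing care is the normalization in~\eqref{aaaaa}. Indeed the quotient satisfies $r(A)=1/\lambda(A)$ rather than $1$, so literally $p_{A,\Xset}^\star=(\lambda/\lambda(A))\,p_{A,\Xset\setminus\lambda}^\star$. I would handle this either by absorbing the constant $\lambda(A)$ into the scalar-ambiguous linear form representing $\lambda$, or by reading~\eqref{aaaaa} as an equality up to a nonzero multiplicative constant, as is customary for fundamental polynomials.
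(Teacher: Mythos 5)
Your proof is correct: the cardinality count, the poisedness argument via multiplying by $\lambda$, and the identification of $p_{A,\Xset}^\star/\lambda$ as the fundamental polynomial of $A$ in $\Xset\setminus\lambda$ (after the normalization you rightly flag) together give exactly the standard proof of this result. The paper itself offers no proof, quoting the statement from Carnicer--Gasca, so there is nothing to compare against; your argument is the expected one and rests only on Propositions~\ref{prp:poised} and~\ref{prp:n+1points}.
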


Next we present the Gasca-Maeztu conjecture, briefly called GM conjecture:

\begin{conjecture}[\cite{GM82}, Sect. 5]\label{conj:GM}
Any $GC_n$ set possesses a maximal line.
\end{conjecture}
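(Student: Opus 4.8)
The statement in question is the Gasca--Maeztu conjecture itself, the central open problem of this subject: it is presently known only for $n\le 5$, and the whole development of the paper proceeds by \emph{assuming} it. The plan I would follow is strong induction on $n$: assume the conjecture for every $GC_m$ set with $m<n$ and deduce it for degree $n$. The inductive hypothesis is meant to enter through Corollary~\ref{crl:minusmax}, which passes from a $GC_n$ set to a $GC_{n-1}$ set upon deletion of a maximal line. The structural obstruction built into this approach is that the reduction presupposes a maximal line --- exactly what we must produce --- so it cannot be applied to $\Xset$ directly and can be used only on auxiliary sub-configurations.

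First I would fix a node $A$ and examine the $n$ lines whose product is its fundamental polynomial $p_A^\star$. Since $p_A^\star$ vanishes at every other node but not at $A$, these (at most $n$) lines avoid $A$ and cover all $N-1$ remaining nodes, which already constrains how the nodes distribute among used lines. A bare pigeonhole count is far too weak, however: it only forces a used line through about $(n+3)/2$ nodes, whereas a maximal line needs $n+1$. So one must pass to the global incidence structure in the manner of Carnicer--Gasca: record, for every line $\ell$ used by at least one node, both the number of nodes lying on $\ell$ and the number using $\ell$, and exploit the identities linking these counts to $N=\binom{n+2}{2}$, to the number $\mu(\Xset)$ of maximal lines, and to the concurrency restrictions of Corollary~\ref{properties} (two maximal lines meet at a node, no three are concurrent, and there are at most $n+2$ of them).

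The core of the argument is a proof by contradiction. Suppose no line meets $\Xset$ in $n+1$ nodes, so that the maximum number $k$ of collinear nodes satisfies $k\le n$. Taking a line $\ell$ that realizes $k$, I would analyse the nodes off $\ell$ together with the lines they use to cover $\ell$ and one another, and show that the resulting incidence pattern cannot simultaneously respect the counting identities and the geometric-characterization property defining a $GC_n$ set. For small $n$ this is accomplished by an explicit finite case analysis on $k$ and on the mutual position of the most heavily used lines, which is how the ranges $n\le 4$ \cite{B90} and $n=5$ \cite{HJZ14} were treated, the inductive hypothesis being invoked whenever a sub-configuration reduces to a $GC_m$ set of smaller degree.

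The decisive difficulty is exactly this case analysis. The number of admissible configurations of used lines grows rapidly with $n$, and because the reduction of Corollary~\ref{crl:minusmax} is unavailable until a maximal line has already been exhibited, each new degree demands essentially fresh combinatorial work rather than a single uniform argument. Finding a degree-independent mechanism that forces $k=n+1$, thereby eliminating the exploding casework, is the genuine open problem; for this reason the conjecture is here adopted as a hypothesis rather than established, and all subsequent results of the paper are proved conditionally upon it.
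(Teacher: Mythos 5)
You were asked to prove the Gasca--Maeztu conjecture itself (Conjecture~\ref{conj:GM}), which the paper states without proof: it is an open problem, settled only for $n\le 5$ (\cite{B90}, \cite{HJZ14}), and every result in the paper is proved \emph{conditionally} on it. Your proposal correctly diagnoses this situation and, appropriately, does not claim a proof --- the induction-plus-incidence-counting roadmap you sketch is a fair summary of how the known low-degree cases were handled, and you explicitly identify the genuine obstruction (the reduction via Corollary~\ref{crl:minusmax} presupposes the very maximal line one must produce, so each degree has required fresh casework). Since the paper contains no proof of this statement, there is nothing to compare your attempt against; the only accurate assessment is the one you already gave, namely that the decisive degree-independent mechanism forcing an $(n+1)$-node line is missing, which is precisely why the statement remains a conjecture and is adopted here as a hypothesis.
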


\noindent  Till now, this conjecture has been confirmed for the degrees
$n\leq 5$ (see \cite{B90}, \cite{HJZ14}).
For a generalization of the Gasca-Maeztu conjecture to maximal curves see \cite{HR}.

In the sequel we will make use of the following important result:

\begin{theorem}[\cite{CG03}, Thm. 4.1]\label{thm:CG}
If the Gasca-Maeztu conjecture is true for all $k\leq n$, then any $GC_n$ set possesses at least
three maximal lines.
\end{theorem}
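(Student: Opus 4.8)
The plan is to argue by induction on the degree $n$, peeling off a single maximal line with the Gasca--Maeztu conjecture and controlling the remainder by the inductive hypothesis. For the base case $n=1$ a $GC_1$ set is just three non-collinear nodes, and the three lines joining them in pairs are exactly its (three) maximal lines, so the claim holds. For the inductive step assume $n\ge 2$ and that the statement is known in all smaller degrees. By Conjecture \ref{conj:GM}, assumed for all $k\le n$, the set $\mathcal X$ has a maximal line $\lambda$, and by Corollary \ref{crl:minusmax} the set $\mathcal X':=\mathcal X\setminus\lambda$ is a $GC_{n-1}$ set. Since the Gasca--Maeztu conjecture is in particular assumed for all $k\le n-1$, the inductive hypothesis provides at least three maximal lines $m_1,m_2,m_3$ of $\mathcal X'$, each carrying exactly $n$ nodes of $\mathcal X'$.

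The second step is to set up an exact correspondence between the maximal lines of $\mathcal X$ and those of $\mathcal X'$. On one hand, if $\ell\neq\lambda$ is any maximal line of $\mathcal X$, then by Corollary \ref{properties}(i) it meets $\lambda$ at a node of $\mathcal X$, so it has one node on $\lambda$ and the remaining $n$ nodes in $\mathcal X'$; hence $\ell$ restricts to a maximal line of $\mathcal X'$. Conversely, a maximal line $m$ of $\mathcal X'$ already carries $n$ nodes of $\mathcal X'$, so it is a maximal $(n+1)$-node line of $\mathcal X$ precisely when it passes through one of the $n+1$ nodes of $\mathcal X$ lying on $\lambda$; we say that such an $m$ $\emph{lifts}$. (Using Corollary \ref{crl:minusmax} one checks the reformulation that $m$ lifts if and only if some node of $\lambda$ uses $m$ in $\mathcal X$, since every linear factor of a fundamental polynomial $p^\star_{A,\mathcal X}$ with $A\in\lambda$ crosses $\lambda$ at a node.) Consequently the maximal lines of $\mathcal X$ are exactly $\lambda$ together with the lifting maximal lines of $\mathcal X'$, so $\mu(\mathcal X)=1+\#\{\,\text{lifting maximal lines of }\mathcal X'\,\}$, and the whole theorem reduces to a single counting statement: at least two of $m_1,m_2,m_3$ lift, i.e. pass through nodes of $\mathcal X$ on $\lambda$.

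The heart of the argument, and the step I expect to be the main obstacle, is therefore this key lemma: at least two maximal lines of $\mathcal X'$ pass through nodes of $\lambda$. I would attack it by an incidence count between the $n+1$ nodes on $\lambda$ and the maximal lines of $\mathcal X'$. For each node $A\in\lambda$ the fundamental polynomial $p^\star_{A,\mathcal X}$ splits into $n$ linear factors, one through each of the other $n$ nodes of $\lambda$, whose union covers all $\binom{n+1}{2}$ nodes of $\mathcal X'$; a non-lifting maximal line of $\mathcal X'$ never occurs among these factors and so is used by no node of $\lambda$. One then combines this with the pairwise node-intersection and non-concurrency properties of the maximal lines of the $GC_{n-1}$ set $\mathcal X'$ given by Corollary \ref{properties} to force all but at most one of the $m_i$ to be used by, hence to pass through, a node of $\lambda$. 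The delicate point — and the real work — is that the naive estimate (each factor meets at most $n-1$ of the $\binom{n+1}{2}$ nodes of $\mathcal X'$) is too lossy once $n\ge 3$, so the overlaps among the $n$ factor lines must be tracked through the triangle configuration of $m_1,m_2,m_3$; the case that $\mathcal X'$ has exactly three maximal lines is the tight one, where no line may fail to lift. Once the key lemma is established, the equality $\mu(\mathcal X)=1+\#\{\text{lifting maximal lines of }\mathcal X'\}\ge 3$ completes the inductive step and hence the theorem.
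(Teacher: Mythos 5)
Your reduction is sound as far as it goes: the base case, the passage to $\mathcal X':=\mathcal X\setminus\lambda$ via Conjecture \ref{conj:GM} and Corollary \ref{crl:minusmax}, and the exact correspondence $\mu(\mathcal X)=1+\#\{\text{lifting maximal lines of }\mathcal X'\}$ are all correct. But the proof is not complete, because the entire content of the theorem has been pushed into your ``key lemma'' (at least two of $m_1,m_2,m_3$ pass through nodes of $\mathcal X$ on $\lambda$), and that lemma is never proved. What you offer for it is a plan --- an incidence count between the $n+1$ nodes of $\lambda$ and the factor lines of their fundamental polynomials, to be ``combined'' with Corollary \ref{properties} --- and you yourself flag that the naive estimate is too lossy and that ``the real work'' is tracking the overlaps, which is not carried out. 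Note that your key lemma is exactly the assertion that at most one maximal line of $\mathcal X'$ fails to lift, i.e.\ $\mu(\mathcal X)\ge\mu(\mathcal X\setminus\lambda)$ when $\mu(\mathcal X\setminus\lambda)\ge 3$; this is Proposition \ref{prp:CG-1} (\cite{CG03}, Crl.~3.5), a genuinely nontrivial structural result, and it does not fall out of the pairwise-intersection and non-concurrency properties alone. Indeed, the obvious local counts give no contradiction: if $m_1,m_2$ are two non-lifting maximal lines of $\mathcal X'$ meeting at a node $Q$, then for each $A\in\lambda$ every factor of $p^\star_{A,\mathcal X}$ simply contains exactly one node of each of $\lambda\setminus\{A\}$, $m_1$ and $m_2$, and the cardinalities all match up.

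The paper itself gives no proof of Theorem \ref{thm:CG} --- it is quoted from \cite{CG03} --- so there is no internal argument to compare yours with; but the paper does quote Proposition \ref{prp:CG-1} separately, and since in \cite{CG03} that corollary precedes and is independent of their Theorem 4.1, you could legitimately close your induction in one line by citing it: $\mu(\mathcal X')\ge 3$ by the inductive hypothesis, hence $\mu(\mathcal X)\in\{\mu(\mathcal X'),\mu(\mathcal X')+1\}$, hence $\mu(\mathcal X)\ge 3$. As written, however, you have replaced the theorem by an unproved statement of essentially the same depth, so there is a genuine gap.
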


This yields, in view of Corollary \ref{properties} (ii) and Proposition \ref{prp:n+1points}, that each node of a $GC_n$  set $\Xset$
uses at least one maximal line.

Denote by  $\mu:=\mu(\Xset)$ the number of maximal lines of the node set  $\Xset.$

\begin{proposition}[\cite{CG03}, Crl. 3.5]\label{prp:CG-1} Let $\lambda$ be a maximal line of a $GC_n$ set $\Xset$ such that $\mu(\Xset\setminus \lambda)\ge 3.$ Then we have that $$\mu(\Xset\setminus \lambda)=\mu(\Xset)\quad \hbox{or}\quad \mu(\Xset)-1.$$
\end{proposition}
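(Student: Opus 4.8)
The plan is to compare the maximal lines of $\Xset$ with those of the set $\Yset:=\Xset\setminus\lambda$, which is a $GC_{n-1}$ set by Corollary \ref{crl:minusmax}. First I would establish the lower bound $\mu(\Yset)\ge\mu(\Xset)-1$. For this, take any maximal line $\ell$ of $\Xset$ with $\ell\ne\lambda$. By Corollary \ref{properties}(i) the lines $\ell$ and $\lambda$ meet at a node of $\Xset$, so exactly one of the $n+1$ nodes of $\ell$ lies on $\lambda$ and the remaining $n$ nodes belong to $\Yset$; hence $\ell$ is a maximal line of $\Yset$. Distinct maximal lines of $\Xset$ obviously restrict to distinct maximal lines of $\Yset$, which yields the asserted inequality.

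For the reverse estimate I would analyse which maximal lines of $\Yset$ arise in this way. A maximal line $\ell$ of $\Yset$ passes through $n$ nodes of $\Yset$ and meets $\lambda$ at a single point $P_\ell=\ell\cap\lambda$; it is a maximal line of $\Xset$ precisely when $P_\ell$ is a node of $\Xset$, so that $\ell$ acquires an $(n+1)$st node, and otherwise it is a genuinely new maximal line. Thus $\mu(\Yset)=(\mu(\Xset)-1)+t$, where $t$ counts the maximal lines of $\Yset$ meeting $\lambda$ outside the node set, and the whole statement reduces to proving $t\le 1$. A first useful observation is that the assignment $\ell\mapsto P_\ell$ is injective on the maximal lines of $\Yset$: if two of them passed through the same point of $\lambda$, that point would be their common intersection and, by Corollary \ref{properties}(i) applied inside $\Yset$, a node of $\Yset$, which is impossible since all nodes of $\Yset$ lie off $\lambda$. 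In particular the $t$ new maximal lines meet $\lambda$ at $t$ distinct non-nodes.

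The crux, and the step I expect to be the main obstacle, is to rule out $t\ge 2$; this is exactly where the hypothesis $\mu(\Yset)\ge 3$ must enter, since in the lower bound it played no role. I would argue by contradiction: assume $\ell_1,\ell_2$ are two new maximal lines and use the hypothesis to fix a third maximal line $\ell_3$ of $\Yset$. By Corollary \ref{properties}(i),(ii) these three lines meet pairwise at three distinct nodes $\ell_i\cap\ell_j$ of $\Yset$ and are not concurrent, while each of $\ell_1,\ell_2$ meets $\lambda$ at a non-node of $\Xset$. The goal is to turn this into an impossibility. My intended route is to examine, via Corollary \ref{crl:minusmax}, the factorizations $p_{A,\Xset}^\star=\lambda\,p_{A,\Yset}^\star$ at the intersection nodes $A=\ell_i\cap\ell_j$: such a node lies on two maximal lines of $\Yset$ and so uses neither of them, yet by the consequence of Theorem \ref{thm:CG} recorded after it, every node of $\Yset$ uses at least one maximal line; tracking which maximal line each such node can use, together with the injectivity of $\ell\mapsto P_\ell$ and Proposition \ref{prp:n+1points}, should over-determine the incidences along $\lambda$ and contradict the $n$-poisedness of $\Xset$.

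I expect the bookkeeping in this last step to be the delicate part. A pure incidence count of the lines $\lambda,\ell_1,\ell_2,\ell_3$ is not by itself contradictory, since the number of forced nodes stays below $\binom{n+2}{2}$; hence the contradiction must come from the finer $GC$ structure, specifically from the constraint that fundamental polynomials split into linear factors combined with the fact that each node uses a maximal line. Making this precise, and verifying that the argument genuinely requires $\mu(\Yset)\ge 3$ rather than fewer maximal lines, is where the real work lies. Once $t\le 1$ is secured, the two alternatives $\mu(\Yset)=\mu(\Xset)$ and $\mu(\Yset)=\mu(\Xset)-1$ follow immediately.
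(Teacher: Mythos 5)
The paper does not prove Proposition \ref{prp:CG-1} at all: it is quoted verbatim from Carnicer and Gasca (\cite{CG03}, Cor.~3.5), so there is no internal proof to compare with, and your attempt has to be judged as a free-standing argument. The parts you actually carry out are correct. The lower bound $\mu(\Xset\setminus\lambda)\ge\mu(\Xset)-1$ via Corollary \ref{properties}(i), the observation that a maximal line of $\Yset:=\Xset\setminus\lambda$ is maximal in $\Xset$ exactly when it meets $\lambda$ at a node, the bookkeeping $\mu(\Yset)=\mu(\Xset)-1+t$, and the injectivity of $\ell\mapsto\ell\cap\lambda$ on maximal lines of $\Yset$ are all sound, and they correctly reduce the proposition to the single claim $t\le 1$.

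That claim, however, is the entire content of the proposition, and you do not prove it: your last two paragraphs are a statement of intent (``I would argue\dots'', ``should over-determine\dots'', ``where the real work lies''), not an argument. The specific mechanism you point to does not close on its own. That $A=\ell_1\cap\ell_2$ uses neither $\ell_1$ nor $\ell_2$ but must use some other maximal line of $\Yset$ is automatic (every node off a maximal line uses it) and yields no constraint; and invoking Theorem \ref{thm:CG} quietly makes the argument conditional on the GM conjecture, while the cited result is unconditional. A concrete fact you could have extracted, but did not, is that a node $B\in\lambda$ cannot use any line meeting $\lambda$ outside $\Xset$: none of the $n$ linear factors of $p_B^\star$ is $\lambda$, each meets $\lambda$ in at most one point, and together they must cover the $n$ remaining nodes of $\lambda$, so every factor meets $\lambda$ at a node. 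Hence each ``new'' maximal line $\ell_i$ is an $n$-node line used by all $\binom{n}{2}$ nodes of $\Yset\setminus\ell_i$ and by no node of $\lambda$. But even with this, deriving a contradiction from the coexistence of two such lines with a third maximal line of $\Yset$ requires the finer incidence analysis of \cite{CG03} (their Section~3 on defects), which is precisely the step missing here. As it stands the proposal establishes $\mu(\Yset)\ge\mu(\Xset)-1$ only; the upper bound $\mu(\Yset)\le\mu(\Xset)$ remains unproved.
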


\begin{definition}[\cite{CG01}]  Given an $n$-poised set $\Xset$ and a  line $\ell.$ Then
$\Xset_\ell$ is the subset of nodes of
$\Xset$ which use the line $\ell.$
\end{definition}
Note that a statement on maximal lines we have mentioned already can be expressed as follows
\begin{equation}\label{maxaaa} \Xset_\ell=\Xset\setminus \ell, \ \hbox{if $\ell$ is a maximal line}.
\end{equation}

Suppose that $\lambda$ is a maximal line of $\Xset$ and $\ell\neq \lambda$ is any line. Then in view of the relation \eqref{aaaaa} we have that
\begin{equation}\label{rep}
\Xset_\ell\setminus \lambda=(\Xset\setminus \lambda)_\ell.
\end{equation}

In the sequel we will use frequently the following two lemmas of Carnicer and Gasca.

Let $\Xset$ be an $n$-poised set and ${\ell}$ be a line with $|\ell\cap\Xset|\le n.$
A maximal line $\lambda$ is called $\ell$-\emph{disjoint} if
\begin{equation}\label{aaaa} \lambda \cap {\ell} \cap  \Xset =\emptyset.
\end{equation}

\begin{lemma}[\cite{CG03}, Lemma 4.4]\label{lem:CG1}
Let $\Xset$ be an $n$-poised set and ${\ell}$ be a line with $|\ell\cap\Xset|\le n.$ Suppose also
that a maximal line $\lambda$ is $\ell$-disjoint.
Then we have that
\begin{equation}\label{aaa}
\Xset_{\ell} = {(\Xset \setminus \lambda)}_{\ell}.
\end{equation}
Moreover, if ${\ell}$ is an $n$-node line then we have that
$\Xset_{\ell} = \Xset \setminus (\lambda \cup {\ell}),$ hence $\Xset_{\ell}$ is an $(n-2)$-poised set.
\end{lemma}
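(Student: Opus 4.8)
The plan is to first reduce the whole statement to the behaviour of the line $\ell$ inside the smaller set $\Xset\setminus\lambda$, and then to handle the nodes lying on $\lambda$ separately. Since $\lambda$ is maximal it carries $n+1$ nodes, so each fundamental polynomial $p_{A,\Xset}^\star$ with $A\in\Xset\setminus\lambda$ vanishes at these $n+1$ collinear points, and Proposition \ref{prp:n+1points} gives $p_{A,\Xset}^\star=\lambda\, g$ with $g\in\Pi_{n-1}$; comparing zeros on $\Xset\setminus\lambda$ identifies $g$ as a nonzero constant multiple of $p_{A,\Xset\setminus\lambda}^\star$, which is the poised analogue of \eqref{aaaaa}. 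Because $\ell\neq\lambda$ as lines, $\ell$ divides $p_{A,\Xset}^\star$ exactly when it divides $p_{A,\Xset\setminus\lambda}^\star$, and this yields \eqref{rep}, namely $\Xset_\ell\setminus\lambda=(\Xset\setminus\lambda)_\ell$. Hence, to prove \eqref{aaa} it suffices to show that no node of $\Xset$ lying on $\lambda$ uses $\ell$, i.e. that $\Xset_\ell\cap\lambda=\emptyset$.

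This last claim is the crux of the argument and the place where $\ell$-disjointness is essential. I would suppose, for contradiction, that some node $A\in\lambda\cap\Xset$ uses $\ell$, so $p_{A,\Xset}^\star=\ell q$ with $q\in\Pi_{n-1}$. The polynomial $p_{A,\Xset}^\star$ vanishes at the remaining $n$ nodes of $\lambda$; by the $\ell$-disjointness condition \eqref{aaaa} none of these $n$ nodes lies on $\ell$, so $\ell$ is nonzero at each of them and therefore $q$ must vanish at all $n$ of them. As $q\in\Pi_{n-1}$ now vanishes at $n=(n-1)+1$ points of the line $\lambda$, Proposition \ref{prp:n+1points} applied in degree $n-1$ forces $\lambda\mid q$, whence $\lambda\mid p_{A,\Xset}^\star$. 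But then $p_{A,\Xset}^\star(A)=0$ because $A\in\lambda$, contradicting $p_{A,\Xset}^\star(A)=1$. This establishes $\Xset_\ell\cap\lambda=\emptyset$ and, together with \eqref{rep}, the first assertion \eqref{aaa}.

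For the second assertion I would pass to $\Xset':=\Xset\setminus\lambda$. Removing the maximal line $\lambda$ leaves exactly $\binom{n+2}{2}-(n+1)=\binom{n+1}{2}=\dim\Pi_{n-1}$ nodes, and a short argument through Proposition \ref{prp:poised} (multiply any $q\in\Pi_{n-1}$ vanishing on $\Xset'$ by $\lambda$ and invoke the poisedness of $\Xset$) shows that $\Xset'$ is $(n-1)$-poised; this is the poised counterpart of Corollary \ref{crl:minusmax}. Since $\lambda$ is $\ell$-disjoint, all $n$ nodes of the $n$-node line $\ell$ survive in $\Xset'$, so $\ell$ carries $n=(n-1)+1$ nodes of the $(n-1)$-poised set $\Xset'$ and is therefore a maximal line of $\Xset'$. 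Applying \eqref{maxaaa} to $\Xset'$ gives $\Xset'_\ell=\Xset'\setminus\ell$, and combining with the first assertion yields $\Xset_\ell=\Xset'_\ell=\Xset\setminus(\lambda\cup\ell)$; finally, removing the maximal line $\ell$ from the $(n-1)$-poised set $\Xset'$ shows this set is $(n-2)$-poised. The main obstacle is precisely the crux step of the middle paragraph, where the geometric $\ell$-disjointness hypothesis must be converted into a divisibility contradiction; once that is in hand, the remaining steps are bookkeeping with the dimension count and the maximality of $\ell$ in $\Xset'$.
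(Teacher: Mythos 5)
Your proof is correct. The paper does not actually prove this lemma---it is imported verbatim from Carnicer--Gasca \cite{CG03}, Lemma 4.4---but your argument is the natural one and all the steps check out: the factorization $p_{A,\Xset}^\star=\lambda g$ for $A\notin\lambda$ giving $\Xset_\ell\setminus\lambda=(\Xset\setminus\lambda)_\ell$, the divisibility contradiction (with $q\in\Pi_{n-1}$ vanishing at $n$ points of $\lambda$) showing no node of $\lambda$ uses $\ell$, and the dimension count making $\ell$ maximal in the $(n-1)$-poised set $\Xset\setminus\lambda$ for the ``moreover'' part.
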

Let $\Xset$ be an $n$-poised set and ${\ell}$ be a line with $|\ell\cap\Xset|\le n.$
Two maximal lines $\lambda', \lambda''$ are called $\ell$-\emph{adjacent} if
\begin{equation}\label{bbbb}  \lambda' \cap \lambda''\cap {\ell} \in \Xset.\end{equation}

\begin{lemma}[\cite{CG03}, proof of Thm. 4.5]\label{lem:CG2}
Let $\Xset$ be an $n$-poised set and ${\ell}$ be a line with $3\le|\ell\cap\Xset|\le n.$ Suppose also
that two maximal lines $\lambda', \lambda''$ are $\ell$-adjacent.
Then we have that
\begin{equation}\label{bbb}
\Xset_{\ell} = {(\Xset \setminus (\lambda' \cup \lambda''))}_{\ell}.
\end{equation}
Moreover, if ${\ell}$ is an $n$-node line then we have that
$\Xset_{\ell} = \Xset \setminus (\lambda' \cup \lambda'' \cup {\ell}),$ hence $\Xset_\ell$ is an $(n-3)$-poised set.
\end{lemma}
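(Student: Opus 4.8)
The plan is to prove \eqref{bbb} by peeling off $\lambda'$ first and then reducing $\lambda''$ to an $\ell$-disjoint line, so that Lemma \ref{lem:CG1} becomes applicable. Write $A:=\lambda'\cap\lambda''\cap\ell$, which is a node of $\Xset$ by \eqref{bbbb}. Since $\lambda',\lambda''$ are maximal while $|\ell\cap\Xset|\le n$, the line $\ell$ is distinct from both, and by Corollary \ref{properties}(i) the two maximal lines meet only at $A$. Note also that $3\le|\ell\cap\Xset|$ forces $n\ge 3$, which is exactly what is needed for the degree bookkeeping below.

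The heart of the argument is the claim that no node lying on $\lambda'$ uses $\ell$, i.e. $\Xset_\ell\cap\lambda'=\emptyset$. The node $A$ lies on $\ell$, hence cannot use $\ell$. For any other node $B\in\Xset\cap\lambda'$ we have $B\notin\ell$ and, since $\lambda'\cap\lambda''=\{A\}$, also $B\notin\lambda''$. Suppose $B$ were to use $\ell$. Then $p_B^\star$ vanishes at all $n+1$ nodes of $\lambda''$, so $\lambda''\mid p_B^\star$ by Proposition \ref{prp:n+1points}; together with $\ell\mid p_B^\star$ and $\ell\ne\lambda''$ this gives $p_B^\star=\ell\,\lambda''\,r$ with $r\in\Pi_{n-2}$. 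Evaluating on the $n-1$ nodes of $\lambda'\setminus\{A,B\}$, where neither $\ell$ nor $\lambda''$ vanishes, forces $r$ to vanish at these $n-1$ collinear points; as $\deg r\le n-2$ this yields $\lambda'\mid r$, whence $\lambda'\mid p_B^\star$ and $p_B^\star(B)=0$, contradicting $p_B^\star(B)=1$. Thus $\Xset_\ell\cap\lambda'=\emptyset$.

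With this in hand the two set identities are routine. Removing the maximal line $\lambda'$ yields an $(n-1)$-poised set $\Xset\setminus\lambda'$ (standard: if $p\in\Pi_{n-1}$ vanishes on $\Xset\setminus\lambda'$ then $\lambda' p\in\Pi_n$ vanishes on $\Xset$, so by Proposition \ref{prp:poised} $p=0$), in which $\lambda''$ still carries $n$ nodes and is therefore maximal, and is now $\ell$-disjoint because its only common point with $\ell$ is $A\in\lambda'$. Since $|\ell\cap(\Xset\setminus\lambda')|=|\ell\cap\Xset|-1\le n-1$, Lemma \ref{lem:CG1} applies and gives $(\Xset\setminus\lambda')_\ell=(\Xset\setminus(\lambda'\cup\lambda''))_\ell$. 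On the other hand \eqref{rep} gives $(\Xset\setminus\lambda')_\ell=\Xset_\ell\setminus\lambda'$, and the claim just proved shows $\Xset_\ell\setminus\lambda'=\Xset_\ell$. Chaining the three equalities proves \eqref{bbb}.

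For the ``moreover'' part, set $\Yset:=\Xset\setminus(\lambda'\cup\lambda'')$, which is $(n-2)$-poised by removing the two successive maximal lines $\lambda'$ and $\lambda''$. When $\ell$ is an $n$-node line it loses exactly the node $A$ upon passing to $\Yset$, its only node on $\lambda'\cup\lambda''$, so $\ell$ carries $n-1=(n-2)+1$ nodes in $\Yset$ and is thus a maximal line of $\Yset$. Hence \eqref{maxaaa}, applied within $\Yset$, gives $\Yset_\ell=\Yset\setminus\ell=\Xset\setminus(\lambda'\cup\lambda''\cup\ell)$, which is $(n-3)$-poised as the complement of a maximal line in the $(n-2)$-poised set $\Yset$. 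Since \eqref{bbb} already yields $\Xset_\ell=\Yset_\ell$, this completes the proof. The only genuinely delicate point is the claim $\Xset_\ell\cap\lambda'=\emptyset$ via the double divisibility above; everything else is careful bookkeeping with \eqref{rep}, Lemma \ref{lem:CG1}, and \eqref{maxaaa}.
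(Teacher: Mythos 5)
Your proof is correct, and it is worth noting at the outset that the paper itself offers no proof of this lemma to compare against: Lemma \ref{lem:CG2} is imported verbatim from Carnicer and Gasca (\cite{CG03}, proof of Thm.~4.5). Measured against that source, your core step --- the double-divisibility argument showing $\Xset_\ell\cap\lambda'=\emptyset$ (from $\ell\mid p_B^\star$ and $\lambda''\mid p_B^\star$ deduce $p_B^\star=\ell\,\lambda''\,r$ with $r\in\Pi_{n-2}$, then force $\lambda'\mid r$ from the $n-1$ remaining collinear nodes of $\lambda'$, contradicting $p_B^\star(B)=1$) --- is essentially the same factorization computation Carnicer and Gasca run, but your overall architecture is leaner: instead of treating $\lambda'$ and $\lambda''$ symmetrically, you peel off $\lambda'$, observe that $\lambda''$ becomes an $\ell$-disjoint maximal line of the $(n-1)$-poised set $\Xset\setminus\lambda'$, and let Lemma \ref{lem:CG1} absorb the entire second half of the work, including the exclusion of the nodes on $\lambda''$. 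The bookkeeping is all sound: $\ell\cap\lambda'=\ell\cap\lambda''=\lambda'\cap\lambda''=\{A\}$ justifies every counting step, the chain $\Xset_\ell=\Xset_\ell\setminus\lambda'=(\Xset\setminus\lambda')_\ell=(\Xset\setminus(\lambda'\cup\lambda''))_\ell$ gives \eqref{bbb} as a genuine set equality, and the ``moreover'' part correctly identifies $\ell$ as a maximal line of the $(n-2)$-poised set $\Yset$ so that \eqref{maxaaa} finishes. Two small remarks. First, you invoke \eqref{rep}, which the paper derives from Corollary \ref{crl:minusmax} in the $GC_n$ setting, whereas the lemma is stated for arbitrary $n$-poised sets; this is harmless, since the identity $p_{A,\Xset}^\star=\lambda\, p_{A,\Xset\setminus\lambda}^\star$ for a maximal line $\lambda$ and $A\notin\lambda$ follows for any $n$-poised set directly from Proposition \ref{prp:n+1points}, but a one-line remark to that effect would make the proof self-contained at the stated level of generality. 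Second, your argument for \eqref{bbb} never actually uses the lower bound $|\ell\cap\Xset|\ge 3$ (only $|\ell\cap\Xset|\le n$ enters; the bound $3$ is needed only so that the ``moreover'' conclusion about $(n-3)$-poisedness makes sense), so you have in fact proved the set identity slightly more generally than stated --- consistent, one can check, with the paper's statement \eqref{2nodel} on $2$-node lines.
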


\noindent Next, by the motivation of above two lemmas, let us introduce the concept of an $\ell$-reduction of a $GC_n$ set.
\begin{definition}\label{def:reduct} Let $\Xset$ be a $GC_n$ set, $\ell$ be a $k$-node line $k\ge 2.$ We say that a set $\Yset\subset\Xset$ is an $\ell$-reduction of $\Xset,$ and briefly denote this by $\Xset\searrow_\ell\Yset,$ if
$$\Yset=\Xset \setminus \left(\Cset_0\cup \Cset_1\cup \cdots \cup\Cset_k\right),$$
where
\begin{enumerate}
\item
$\Cset_0$ is an $\ell$-disjoint  maximal line of $\Xset,$ or $\Cset_0$ is the union of a pair of $\ell$-adjacent maximal lines of $\Xset;$
\item
$\Cset_i$ is an $\ell$-disjoint maximal line of the $GC$ set $\Yset_i:=\Xset\setminus (\Cset_0\cup \Cset_1\cup \cdots \cup \Cset_{i-1}),$ or $\Cset_i$  is the union of a pair of $\ell$-adjacent maximal lines of $\Yset_i,\ i=1,\ldots k;$
\item
$\ell$ passes through at least $2$ nodes of $\Yset.$
\end{enumerate}
\end{definition}

Note that, in view of Corollary \ref{crl:minusmax}, the set $\Yset$ here is a $GC_m$ set, where
$m=n-\sum_{i=0}^{k}\delta_i,$ and $\delta_i =1$ or $2$ if
$\Cset_i$ is an
 $\ell$-disjoint maximal line or a union of a pair of $\ell$-adjacent maximal lines, respectively.

We get immediately from Lemmas \ref{lem:CG1} and \ref{lem:CG2} that
\begin{equation}\label{abcd}\Xset\searrow_\ell\Yset \Rightarrow \Xset_\ell=\Yset_\ell.\end{equation}
Notice that we cannot do any further $\ell$-reduction with the set $\Yset$ if the line $\ell$ is a maximal line here.
For this situation we have the following
\begin{definition}\label{def:proper} Let $\Xset$ be a $GC_n$ set, $\ell$ be a $k$-node line, $k\ge 2.$ We say that the set $\Xset_\ell$  is $\ell$-proper $GC_m$ subset of $\Xset$
if there is a $GC_{m+1}$ set $\Yset$ such that
\begin{enumerate}
\item
$\Xset\searrow_\ell\Yset;$
\item
The line $\ell$ is a maximal line in $\Yset.$
\end{enumerate}
\end{definition}
Note that, in view of the relations \eqref{abcd} and \eqref{maxaaa} here we have that
$$\Xset_\ell = \Yset\setminus \ell=\Xset \setminus \left(\Cset_0\cup \Cset_1\cup \cdots \cup\Cset_k\cup\ell\right),$$
where the sets $\Cset_i$ satisfy conditions listed in Definition \ref{def:reduct}.

In view of this relation and Corollary \ref{crl:minusmax} we get that infact $\Xset_\ell$  is a $GC_m$ set, if it is an $\ell$-proper $GC_m$ subset of $\Xset.$

Note also that the node set $\Xset_\ell$ in Lemma \ref{lem:CG1} or in Lemma \ref{lem:CG2} is an $\ell$-proper subset of $\Xset$ if $\ell$ is an $n$-node line.

We immediately get from Definitions \ref{def:reduct} and \ref{def:proper} the following
\begin{proposition} \label{proper} Suppose that $\Xset$ is a $GC_n$ set.
 If $\Xset\searrow_\ell\Yset$ and $\Yset_\ell$ is a proper $GC_m$ subset of $\Yset$ then $\Xset_\ell$ is a proper $GC_m$ subset of $\Xset.$
\end{proposition}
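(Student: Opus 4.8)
The plan is to prove the statement as a transitivity property of $\ell$-reductions: the composition of two successive $\ell$-reductions is again a single $\ell$-reduction, so the witness for $\Yset$ serves directly as a witness for $\Xset$.

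First I would unwind the hypotheses. Since $\Yset_\ell$ is a proper $GC_m$ subset of $\Yset$, Definition \ref{def:proper} furnishes a $GC_{m+1}$ set $\Zset$ with $\Yset\searrow_\ell\Zset$ and with $\ell$ a maximal line of $\Zset$. On the other hand, $\Xset\searrow_\ell\Yset$ gives $\Yset=\Xset\setminus(\Cset_0\cup\cdots\cup\Cset_p)$ for some sequence of reduction sets as in Definition \ref{def:reduct}, and likewise $\Zset=\Yset\setminus(\Cset_0'\cup\cdots\cup\Cset_q')$ for a second such sequence. The candidate witness for the conclusion is precisely this same set $\Zset$.

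The heart of the argument is to verify that the concatenated sequence
$$\Cset_0,\ldots,\Cset_p,\ \Cset_0',\ldots,\Cset_q'$$
realizes $\Xset\searrow_\ell\Zset$, which I would do by checking the three conditions of Definition \ref{def:reduct} in turn. Condition (i) for the combined sequence is condition (i) for $\Xset\searrow_\ell\Yset$, since the first set $\Cset_0$ is unchanged. For condition (ii) the crucial observation is that the intermediate sets are unaffected by the gluing: after removing $\Cset_0\cup\cdots\cup\Cset_p$ from $\Xset$ one arrives exactly at $\Yset$, so the intermediate set immediately preceding $\Cset_0'$ in the combined sequence equals $\Yset$, and inductively the intermediate set preceding each $\Cset_j'$ coincides with the corresponding intermediate set used in the reduction $\Yset\searrow_\ell\Zset$. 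Hence each $\Cset_j'$ is still an $\ell$-disjoint maximal line, or a pair of $\ell$-adjacent maximal lines, of the required set, while the conditions for the $\Cset_i$ are inherited directly from $\Xset\searrow_\ell\Yset$. Finally, condition (iii) asks that $\ell$ pass through at least two nodes of $\Zset$; since $\ell$ is maximal in the $GC_{m+1}$ set $\Zset$ it passes through $m+2\ge 2$ of its nodes, so (iii) holds. The index bounds are merely reindexed, which is routine bookkeeping.

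Having established $\Xset\searrow_\ell\Zset$ with $\Zset$ a $GC_{m+1}$ set in which $\ell$ is maximal, Definition \ref{def:proper} immediately yields that $\Xset_\ell$ is an $\ell$-proper $GC_m$ subset of $\Xset$, as required; note in passing that $\Xset_\ell=\Yset_\ell$ by \eqref{abcd}, consistent with the conclusion. The only point requiring care, and the sole potential obstacle, is the bookkeeping in condition (ii), namely confirming that the intermediate sets of the second reduction are genuinely reproduced inside the combined sequence; once this matching is made explicit, no further computation is needed.
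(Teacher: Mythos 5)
Your argument is correct and matches the paper's intent exactly: the paper states Proposition \ref{proper} as following ``immediately'' from Definitions \ref{def:reduct} and \ref{def:proper}, and the implicit reasoning is precisely your concatenation of the two reduction sequences $\Cset_0,\ldots,\Cset_p,\Cset_0',\ldots,\Cset_q'$, with the intermediate-set bookkeeping you describe. Nothing further is needed.
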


\subsection{Classification of $GC_n$ sets\label{ssec:SE}}

Here we will consider  the results of Carnicer, Gasca, and God\'es concerning the classification of $GC_n$ sets according to the number of maximal lines the sets possesses. Let us start with

\begin{theorem}[\cite{CGo10}]\label{th:CGo10} Let $\Xset$ be a $GC_n$ set with $\mu(\Xset)$ maximal lines. Suppose also that $GM$ conjecture is true for the degrees not exceeding $n.$ Then $\mu(\Xset)\in\left\{3, n-1, n,n+1,n+2\right\}.$
\end{theorem}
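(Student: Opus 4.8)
The plan is to argue by induction on the degree $n$, bracketing $\mu(\Xset)$ by the two cited facts $3\le\mu(\Xset)$ (Theorem~\ref{thm:CG}) and $\mu(\Xset)\le n+2$ (Corollary~\ref{properties}~(iii)), and then controlling how the count changes under removal of a single maximal line via Proposition~\ref{prp:CG-1}. For $n\le 5$ the admissible set $\{3,n-1,n,n+1,n+2\}$ already fills the whole interval $[3,n+2]$, so these serve as trivial base cases; the real content of the statement is the exclusion of the \emph{gap} values $4,5,\ldots,n-2$ when $n\ge 6$.

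For the inductive step I fix a $GC_n$ set $\Xset$ with $n\ge 6$, write $\mu:=\mu(\Xset)$, and choose any maximal line $\lambda$. By Corollary~\ref{crl:minusmax} the set $\Xset':=\Xset\setminus\lambda$ is a $GC_{n-1}$ set, so $\mu':=\mu(\Xset')\ge 3$ by Theorem~\ref{thm:CG}, and Proposition~\ref{prp:CG-1} gives $\mu'\in\{\mu-1,\mu\}$. The induction hypothesis applied to $\Xset'$ forces $\mu'\in\{3,n-2,n-1,n,n+1\}$. Combining the two, if $\mu$ were an \emph{interior} gap value $5\le\mu\le n-3$, then $\mu'$ would lie in $\{4,5,\ldots,n-3\}$, which is disjoint from $\{3,n-2,n-1,n,n+1\}$ — a contradiction. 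Thus the entire interior of the gap is ruled out in one stroke, and only the two endpoints $\mu=4$ and $\mu=n-2$ survive and must be excluded by a separate argument.

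The endpoints are the crux, precisely because the reduction step no longer yields a direct contradiction: for $n\ge 7$ the value $\mu=n-2$ is forced to persist as $\mu'=n-2$, which is the \emph{admissible} value $(n-1)-1$ for $\Xset'$, so the induction merely escapes the gap instead of closing it; analogously $\mu=4$ descends to the admissible value $3$. To treat $\mu=4$ I would peel off all four maximal lines by iterating Corollary~\ref{crl:minusmax} — after one removal each remaining maximal line retains $n$ nodes and so stays maximal, and likewise at each later step — leaving a residual $GC_{n-4}$ set $\Xset^\ast$ on the $\binom{n-2}{2}$ nodes lying off all four lines, every node of which uses all four maximal lines. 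By Theorem~\ref{thm:CG}, $\Xset^\ast$ carries at least three maximal lines of its own, and none of these can extend to a maximal line of $\Xset$ (that would be a fifth one). The aim is to show that these inner lines, the four outer ones, and the non-concurrence constraints of Corollary~\ref{properties} cannot be reconciled, forcing either a fifth maximal line ($\mu\ge 5$) or a collapse to $\mu=3$. The case $\mu=n-2$ I would handle by the parallel route, tracking how the many maximal lines of $\Xset'$ fail to remain maximal in $\Xset$ and exploiting the near-lattice rigidity that $n-2$ maximal lines impose.

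I expect this structural analysis of the two endpoints to be the main obstacle. Unlike the interior, which falls out cleanly from the interplay of Proposition~\ref{prp:CG-1} and the induction hypothesis, excluding $\mu=4$ and $\mu=n-2$ seems to demand the fine geometry of how the nodes distribute among and off the maximal lines — essentially the Chung--Yao and principal-lattice rigidity underlying the classes $\mu\in\{n-1,n,n+1,n+2\}$ — rather than counting alone.
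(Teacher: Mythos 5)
Your proposal has a genuine gap, and you essentially concede it yourself. The interior reduction is correct and clean: using Corollary~\ref{crl:minusmax}, Theorem~\ref{thm:CG} and Proposition~\ref{prp:CG-1} to force $\mu(\Xset\setminus\lambda)\in\{\mu-1,\mu\}\cap\{3,n-2,n-1,n,n+1\}$ does kill every value $\mu\in\{5,\dots,n-3\}$. But the two surviving values $\mu=4$ and $\mu=n-2$ are not a technical afterthought; they are the entire content of the theorem for $n=6$ and $n=7$ (where the ``interior'' is empty) and the genuinely hard part for all $n$. For these you offer only a programme (``the aim is to show that these inner lines, the four outer ones, and the non-concurrence constraints cannot be reconciled''), not an argument: nothing in your counting of the residual $GC_{n-4}$ set on $\binom{n-2}{2}$ nodes, nor in the observation that its maximal lines cannot extend to maximal lines of $\Xset$, actually produces a contradiction, and it is not clear how it would without the full classification machinery (generalized principal lattices, the defect analysis of Carnicer and God\'es). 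Note also that your $\mu=n-2$ case is stable under the reduction for every $n\ge 7$ ($\mu'=n-2$ is admissible at degree $n-1$), so no amount of iterating the peeling step will close it.

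For comparison: the paper does not prove this statement at all --- Theorem~\ref{th:CGo10} is imported by citation from \cite{CGo10}, where the exclusion of the values you leave open (equivalently, of defects strictly between $3$ and $n-1$) is the main result and requires a substantial structural analysis, not a counting argument layered on Proposition~\ref{prp:CG-1}. So your attempt is best described as a correct reduction of the cited theorem to its two hardest cases, with those cases unproved.
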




\noindent \emph{1. Lattices with $n+2$ maximal lines - the Chung-Yao natural lattices.}

\vspace{.2cm}

Let a set $\Lset$ of $n+2$ lines be in general position, i.e., no two lines are parallel and no three lines are concurrent, $n\ge 1.$ Then the Chung-Yao set is defined as
the set $\Xset$ of all $\binom{n+2}{2}$ intersection points of these lines. Notice that the $n+2$ lines of $\Lset$ are maximal for $\Xset.$ Each fixed node
here is lying in exactly $2$ lines and does not belong to the
remaining $n$ lines. Observe that the product of the latter $n$ lines gives
the fundamental polynomial of the fixed node. Thus $\Xset$ is a
$GC_n$ set.

Let us mention that any $n$-poised set with $n+2$ maximal lines clearly forms a Chung-Yao lattice. Recall that there are no $n$-poised sets with more maximal lines  (Proposition \ref{properties}, (iii)).
\vspace{.2cm}

\noindent \emph{2. Lattices with $n+1$ maximal lines - the Carnicer-Gasca lattices.}
\vspace{.2cm}

Let a set $\Lset$ of $n+1$ lines be in general position, $n\ge 2.$ Then
the Carnicer-Gasca lattice $\Xset$ is defined as $\Xset:=\Xset^\times\cup\Xset',$
where $\Xset^\times$ is the set of all intersection points of these $n+1$ lines and $\Xset'$ is a set of other $n+1$  non-collinear nodes, called "free" nodes,
one in each line, to make the line maximal.
We have that $|\Xset|=\binom{n+1}{2}+(n+1)=\binom{n+2}{2}.$ Each fixed "free" node
here is lying in exactly $1$ line. Observe, that the product of the remaining $n$ lines gives
the fundamental polynomial of the fixed "free" node. Next, each fixed intersection node
is lying in exactly $2$ lines. The product of the remaining $n-1$ lines and the line passing through the two "free" nodes in the $2$ lines gives
the fundamental polynomial of the fixed intersection node. Thus $\Xset$ is a
$GC_n$ set. It is easily seen that $\Xset$ has exactly $n+1$ maximal lines, i.e., the lines of $\Lset.$

Let us mention that any $n$-poised set with exactly $n+1$ maximal lines clearly forms a Carnicer-Gasca lattice (see \cite{CG00}, Proposition 2.4).
\vspace{.2cm}

\noindent \emph{3. Lattices with $n$ maximal lines.}
\vspace{.2cm}

Suppose that a $GC_n$ set $\Xset$ possesses exactly $n$ maximal lines, $n\ge 3.$ These lines are in a general position and we have that $\binom{n}{2}$ nodes of $\Xset$ are intersection nodes of these lines. Clearly in each maximal line there are $n-1$ intersection nodes and therefore there are $2$ more nodes, called "free" nodes, to make the line maximal. Thus
$N-1=\binom{n}{2}+2n$ nodes are identified. The last node $O,$ called outside node, is outside the maximal lines.

Thus the lattice $\Xset$ has the following construction
\begin{equation}\label{01O}\Xset:=\Xset^\times\cup\Xset''\cup \{O\},\end{equation}
where $\Xset^\times$ is the set of intersection nodes, and $\Xset''$ is the set of $2n$  "free" nodes.

In the sequel we will need the following characterization of $GC_n$ sets with exactly
$n$ maximal lines due to Carnicer and Gasca:

\begin{proposition}[\cite{CG00}, Prop. 2.5]\label{prp:nmax}
A set $\Xset$ is a $GC_n$ set with exactly $n,\ n\ge 3,$ maximal lines $\lambda_1,\ldots,\lambda_n,$ if and only if the representation \eqref{01O} holds with the following additional properties:
\vspace{-1.5mm} \begin{enumerate} \setlength{\itemsep}{0mm}
\item  There are $3$ lines $\ell_1^o, \ell_2^o,\ell_3^o$ concurrent at the outside node $O: \ O=\ell_1^o\cap \ell_2 ^o\cap \ell_3^o$ such  that  $\Xset''\subset \ell_1^o\cup \ell_2^o \cup \ell_3^o;$
\item No line $\ell_i^o,\ i=1,2,3,$ contains $n+1$ nodes of $\Xset.$
\end{enumerate}
\end{proposition}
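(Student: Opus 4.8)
The plan is to prove both implications by analyzing (respectively constructing) the $n$-fundamental polynomials of the three kinds of node appearing in \eqref{01O}. By Corollary \ref{properties} the $n$ maximal lines are in general position, so they produce $\binom{n}{2}$ intersection nodes, with $n-1$ of them on each $\lambda_i$ and hence two further free nodes per line; since $\binom{n}{2}+2n=N-1$, this both justifies the representation \eqref{01O} and isolates a single outside node $O$. As every node of $\Xset\setminus\{O\}$ lies on some $\lambda_i$ while $O$ lies on none, the product $\lambda_1\cdots\lambda_n$ vanishes on $\Xset\setminus\{O\}$ and is nonzero at $O$; thus $p_O^\star=c\,\lambda_1\cdots\lambda_n$ and $O$ uses all $n$ maximal lines. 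For a free node $A$ on $\lambda_1$, it is off each $\lambda_j$ with $j\ge 2$, so $p_A^\star$ vanishes at all $n+1$ nodes of every such line and Proposition \ref{prp:n+1points} forces $\lambda_2\cdots\lambda_n\mid p_A^\star$. The single residual factor must then vanish exactly at the partner free node $A'$ of $\lambda_1$ and at $O$, but not at $A$; hence it is the line $OA'$, and $A,A',O$ are necessarily non-collinear. Consequently each $\lambda_i$ furnishes two free nodes $A_i,A_i'$ lying on two distinct lines $OA_i\ne OA_i'$ through $O$, and every free node uses one such line.

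It remains to show that only three such lines occur. For an intersection node $B=\lambda_i\cap\lambda_j$ the same divisibility argument gives $\prod_{k\ne i,j}\lambda_k\mid p_B^\star$, leaving a product $m\,m'$ of two lines that must pass through the five points $A_i,A_i',A_j,A_j',O$ while avoiding $B$; here $m,m'$ cannot equal $\lambda_i$ or $\lambda_j$, since these contain $B$. Running through the ways the five points split between $m$ and $m'$, I would show that the lines $OA$ attached to free nodes assume at most three values, that these three lines pass through $O$, and that together they carry all $2n$ free nodes; these are the lines $\ell_1^o,\ell_2^o,\ell_3^o$ of (i). Property (ii) is then immediate, since a line $\ell_p^o$ through $n+1$ nodes would be an additional maximal line, contradicting $\mu(\Xset)=n$.

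For the converse I reverse the construction. Given \eqref{01O} together with (i) and (ii), put $p_O^\star=\lambda_1\cdots\lambda_n$; for a free node $A_i$ on $\lambda_i$ whose partner $A_i'$ lies on $\ell_q^o$, put $p_{A_i}^\star=\big(\prod_{j\ne i}\lambda_j\big)\ell_q^o$; and for $B=\lambda_i\cap\lambda_j$ put $p_B^\star=\big(\prod_{k\ne i,j}\lambda_k\big)\,m\,m'$, where $m,m'$ are the two lines supplied by (i) (two of the $\ell_p^o$, or one $\ell_p^o$ together with the line joining two free nodes) that pass through $A_i,A_i',A_j,A_j',O$. Each is a product of $n$ lines vanishing at all other nodes, and the concurrency of the $\ell_p^o$ at $O$ together with the general position of the $\lambda$'s guarantees non-vanishing at the home node; this realizes $\Xset$ as a $GC_n$ set, while (ii) keeps the $\ell_p^o$ non-maximal, so that $\mu(\Xset)=n$ exactly.

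The main obstacle I anticipate is the rigidity step in the forward direction: using the polynomials $p_B^\star$ to control the whole family of lines $OA$ and to prove that exactly three of them occur, are concurrent at $O$, and exhaust the free nodes. The five-point splitting for $\{A_i,A_i',A_j,A_j',O\}$ breaks into several cases, and excluding configurations with four or more lines through $O$ — equivalently, forcing the pairing pattern of the free nodes onto three lines — is the delicate point; the converse then reduces to checking that these same incidences make each constructed product vanish, and survive, exactly where required.
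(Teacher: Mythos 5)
The paper offers no proof of Proposition~\ref{prp:nmax}; it is quoted from Carnicer--Gasca \cite{CG00}, so your attempt can only be measured against the standard argument. Your overall strategy (classify the fundamental polynomials of the three node types, extract forced linear factors via Proposition~\ref{prp:n+1points}) is the right one, and the first half is sound: the count $\binom{n}{2}+2n=N-1$ justifies \eqref{01O}, $p_O^\star=c\,\lambda_1\cdots\lambda_n$, and each free node $A$ on $\lambda_i$ uses the line through $O$ and its partner $A'$, whence $A,A',O$ are non-collinear.

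The genuine gap is exactly where you flag it: the claim that the lines $OA$, $A\in\Xset''$, take at most three values is announced with ``I would show'' and never carried out, yet it \emph{is} the content of condition (i); without it you have proved nothing beyond the trivial part of the statement. The case analysis you propose does close the gap, but you need two further observations that are absent from your sketch. First, from $B=\lambda_i\cap\lambda_j$ the two residual lines $m,m'$ each contain at most one free node of $\lambda_i$ and one of $\lambda_j$ (otherwise they would equal $\lambda_i$ or $\lambda_j$ and pass through $B$), so the split of the four free nodes is forced to be $2$--$2$ and the line among $m,m'$ containing $O$ witnesses that the two-element sets $\{OA_i,OA_i'\}$ and $\{OA_j,OA_j'\}$ intersect, for every $i\neq j$. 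Second, a pairwise-intersecting family of $2$-sets either has a common element or consists of the three edges of a triangle; the common-element case must be \emph{excluded}, because a single line through $O$ meeting every $\{OA_i,OA_i'\}$ would carry one free node from each $\lambda_i$ plus $O$, i.e.\ $n+1$ nodes, and would be an $(n+1)$-st maximal line, contradicting $\mu(\Xset)=n$. (The same observation gives (ii) at once.) Your text never invokes this maximal-line contradiction, and without it configurations with four or more lines through $O$ are not ruled out. A smaller but real omission occurs in the converse: for $p_{A_i}^\star=\bigl(\prod_{j\ne i}\lambda_j\bigr)\ell_q^o$ to be fundamental you must check $A_i\notin\ell_q^o$, i.e.\ that partner free nodes lie on \emph{different} lines $\ell_p^o$; this does follow (two free nodes of $\lambda_i$ on one $\ell_q^o$ would force $\ell_q^o=\lambda_i\not\ni O$), but it is not the ``general position of the $\lambda$'s'' you appeal to, and the analogous non-vanishing check for the intersection nodes $B$ also needs an explicit argument rather than a one-line assurance.
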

\vspace{.2cm}

\noindent \emph{4. Lattices with $n-1$ maximal lines.}
\vspace{.2cm}

Suppose that a $GC_n$ set $\Xset$ possesses exactly $n-1$ maximal lines: $\lambda_1,\ldots,\lambda_{n-1},$ where $n\ge 4.$ These lines are in a general position and we have that $\binom{n-1}{2}$ nodes of $\Xset$ are intersection nodes of these lines. Now, clearly in each maximal line there are $n-2$ intersection nodes and therefore there are $3$ more nodes, called "free" nodes, to make the line maximal. Thus
$N-3=\binom{n-1}{2}+3(n-1)$ nodes are identified. The last $3$ nodes $O_1,O_2,O_3$ called outside nodes, are outside the maximal lines. Clearly, the outside nodes are non-collinear.
Indeed, otherwise the set $\Xset$ is lying in $n$ lines, i.e., $n-1$ maximal lines and the line passing through the outside nodes. This, in view of Proposition \ref{prp:poised}, contradicts the $n$-poisedness of $\Xset.$

Thus the lattice $\Xset$ has the following construction

\begin{equation}\label{01OOO} \Xset:=\Xset^\times\cup\Xset'''\cup \{O_1,O_2,O_3\},\end{equation}
where $\Xset^\times$  is the set of intersection nodes, and $\\$ $\Xset''' =\left\{ A_{i}^1, A_i^2,A_i^3\in \lambda_i,  : 1\le i\le n-1\right\},$
is the set of $3(n-1)$ "free" nodes.

Denote by $\ell_{i}^{oo},\ 1\le i\le 3,$ the line passing through the two outside nodes $\{O_1,O_2,O_3\}\setminus \{O_i\}.$ We call this lines $OO$ lines.

In the sequel we will need the following characterization of $GC_n$ sets with exactly
$n-1$ maximal lines due to Carnicer and God\'es:

\begin{proposition}[\cite{CGo07}, Thm. 3.2]\label{prp:n-1max}
A set $\Xset$ is a $GC_n$ set with exactly $n-1$ maximal lines $\lambda_1,\ldots,\lambda_{n-1},$ where $n\ge 4,$ if and only if, with some permutation of the indexes of the maximal lines and "free" nodes, the representation \eqref{01OOO} holds with the following additional properties:
\vspace{-1.5mm} \begin{enumerate} \setlength{\itemsep}{0mm}
\item  $\Xset'''\setminus \{ A_{1}^1, A_2^2,A_3^3\}\subset \ell_{1}^{oo}\cup \ell_{2}^{oo} \cup \ell_{3}^{oo};$
\item Each line $\ell_{i}^{oo}, i=1,2,3,$ passes through exactly $n-2$ "free" nodes (and through $2$ outside nodes). Moreover, $\ell_{i}^{oo}\cap\lambda_i\notin\Xset,\ i=1,2,3;$
\item  The triples $\{O_1,A_2^2,A_3^3\},\ \{O_2,A_1^1,A_3^3\},\ \{O_3,A_1^1,A_2^2\}$ are collinear.
\end{enumerate}
\end{proposition}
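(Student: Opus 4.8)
The plan is to read the configuration directly off the fundamental polynomials, using that in a $GC_n$ set each of them is a product of $n$ lines. For necessity, let $\Xset$ be a $GC_n$ set with exactly $n-1$ maximal lines, in the notation of \eqref{01OOO}. Since the outside node $O_i$ lies off every maximal line while each $\lambda_j$ carries $n+1$ nodes distinct from $O_i$, Proposition \ref{prp:n+1points} forces every $\lambda_j$ to divide $p^\star_{O_i}$; a single linear factor then remains, and having to vanish at the two remaining nodes but not at $O_i$ it must equal $\ell_i^{oo}$. Thus $p^\star_{O_i}=\lambda_1\cdots\lambda_{n-1}\,\ell_i^{oo}$, and its non-vanishing at $O_i$ re-proves that $O_1,O_2,O_3$ are non-collinear. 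For a free node $A_i^r\in\lambda_i$ the same argument shows the $n-2$ lines $\lambda_j$, $j\ne i$, divide $p^\star_{A_i^r}$, leaving a degree-two factor $m_1m_2$ which must vanish exactly at the two other free nodes of $\lambda_i$ and at $O_1,O_2,O_3$, and not at $A_i^r$.

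The combinatorial core is to exploit this factorization. The two other free nodes on $\lambda_i$ cannot lie on a single $m$-line (that line would be $\lambda_i$, vanishing at $A_i^r$), so they separate between $m_1$ and $m_2$; as $O_1,O_2,O_3$ are non-collinear they split $2$-$1$, whence one of $m_1,m_2$ is a line through two outside nodes, an $OO$ line, carrying one of the two free nodes. Since an $OO$ line already meets two outside nodes it cannot pick up more than $n-2$ free nodes without becoming an $(n+1)$-st maximal line, and as distinct $OO$ lines meet only at an outside node a free node lies on at most one of them; a count then gives exactly $3(n-2)$ free nodes on the three $OO$ lines, hence $n-2$ on each, with exactly three free nodes escaping. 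Moreover no two escaping nodes can share a maximal line, for the factorization at a third free node of that line would force one of them onto an $OO$ line. Relabelling the escaping nodes as $A_1^1,A_2^2,A_3^3$ on $\lambda_1,\lambda_2,\lambda_3$ yields (i) and (ii); cross-referencing the degree-two factorizations of these three exceptional nodes pins down the collinear triples (iii).

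For sufficiency, assume \eqref{01OOO} with (i)-(iii). As $|\Xset|=\binom{n+2}{2}$, it suffices to exhibit for every node a fundamental polynomial that is a product of $n$ lines, for this gives $n$-poisedness (the fundamental polynomials make the evaluation map onto) together with the $GC_n$ property. For $O_i$ use $\lambda_1\cdots\lambda_{n-1}\,\ell_i^{oo}$; for a free node use the $n-2$ foreign maximal lines together with two lines supplied by (i)-(iii); and for an intersection node $P=\lambda_a\cap\lambda_b$ use the $n-3$ foreign maximal lines together with three lines chosen, by means of the $OO$ lines and the triples of (iii), to cover the nine remaining nodes, namely the three free nodes on each of $\lambda_a$ and $\lambda_b$ and $O_1,O_2,O_3$, while missing $P$. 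One then checks that $\lambda_1,\ldots,\lambda_{n-1}$ are maximal and, invoking (ii), that every $OO$ line meets only $n$ nodes and no other line meets $n+1$, so that $\mu(\Xset)=n-1$.

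I expect the delicate point to be the global reconciliation in the necessity direction: passing from the purely local $2$-$1$ splits at each free node to a consistent global incidence pattern, and in particular verifying that the three collinear triples (iii) are mutually compatible. In the sufficiency direction the matching difficulty is producing the three-line factor for an intersection node, i.e.\ showing that (i)-(iii) always furnish three lines through the required nine nodes that avoid the node itself; here the non-degeneracy clause $\ell_i^{oo}\cap\lambda_i\notin\Xset$ of (ii) is exactly what excludes an accidental $(n+1)$-st collinear node, and hence a spurious maximal line.
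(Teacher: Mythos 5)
First, a point of reference: the paper does not prove Proposition \ref{prp:n-1max} at all --- it is imported verbatim from Carnicer and God\'es \cite{CGo07}, Theorem 3.2 --- so there is no in-paper argument to compare yours with, and I can only assess your proposal on its own terms. Your sufficiency direction and the opening moves of necessity (the factorizations $p^\star_{O_i}=\lambda_1\cdots\lambda_{n-1}\,\ell_i^{oo}$ and $p^\star_{A_i^r}=\prod_{j\ne i}\lambda_j\cdot m_1m_2$, the forced $1$--$1$ split of the two remaining free nodes and $2$--$1$ split of the outside nodes, the bound of $n-2$ free nodes per $OO$ line, and ``at most one escaping free node per maximal line'') are all sound.

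The necessity direction, however, has two genuine gaps. First, the count does not give ``exactly $3(n-2)$ free nodes on the three $OO$ lines.'' What your local analysis actually yields is: each maximal line has at least two of its three free nodes on $OO$ lines, hence at least $2(n-1)$ free nodes lie on $OO$ lines and at most $n-1$ escape; and at most $3(n-2)$ lie on $OO$ lines, hence at least $3$ escape. These bounds pin the number of escaping nodes to exactly $3$ only when $n=4$; for $n\ge 5$ the interval $[3,\,n-1]$ is not a point, and nothing in your argument excludes, say, four maximal lines each carrying one escaping node. Closing this requires new input, e.g.\ analysing the degree-three factor of $p^\star_P$ at an intersection node $P$ of two maximal lines that both carry an escaping node, or an induction on $n$. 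Second, and more seriously, condition (iii) cannot be obtained by ``cross-referencing the degree-two factorizations of these three exceptional nodes'': the quadratic factor of $p^\star_{A_i^i}$ vanishes only at the two other free nodes of $\lambda_i$ and at $O_1,O_2,O_3$, so it never sees an exceptional node of a \emph{different} maximal line; the quadratic factors at the non-exceptional free nodes likewise only produce lines of the form $\ell(O_j,A_i^i)$ without tying them to $A_{i'}^{i'}$. The collinearity of, say, $\{O_3,A_1^1,A_2^2\}$ can only emerge from the cubic factor of $p^\star_{P}$ with $P=\lambda_1\cap\lambda_2$, which must distribute the six free nodes of $\lambda_1\cup\lambda_2$ and the three outside nodes over three lines missing $P$; that case analysis is the heart of the theorem and is absent from your proposal. (A smaller omission: the index matching $\ell_i^{oo}\cap\lambda_i\notin\Xset$ in (ii) can only be set up after the exact count of three escaping nodes is known.)
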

\vspace{.2cm}

\noindent \emph{5. Lattices with $3$ maximal lines - generalized principal lattices.}
\vspace{.2cm}

A principal lattice is defined as an affine image of the set
$$PL_n:=\left\{(i,j)\in{\mathbb N}_0^2 : \quad i+j\le n\right\}.$$
Observe that  the following $3$ set of $n+1$ lines, namely $\{x=i:\ i=0,1,\ldots,n+1\},\ \  \{y=j:\ j=0,1,\ldots,n+1\}$ and $\{x+y=k:\ k=0,1,\ldots,n+1\},$ intersect at $PL_n.$
We have that $PL_n$ is a $GC_n$ set. Moreover, clearly the following polynomial is the fundamental polynomial of the node $(i_0,j_0)\in PL_n:$
\begin{equation}\label{aaabbc} p_{i_0j_0}^\star(x,y)  =\prod_{0\le i<i_0,\ 0\le j<j_0,\ 0\le k<k_0} (x-i)(y-j)(x+y-n+k),\end{equation}
where $k_0=n-i_0-j_0.$

Next let us bring the definition of the generalized principal lattice due to Carnicer, Gasca and God\'es (see \cite{CG05}, \cite{CGo06}):
\begin{definition}[\cite{CGo06}] \label{def:GPL} A node set $\Xset$ is called a generalized principal lattice, briefly $GPL_n$ if there are $3$ sets of lines each containing $n+1$ lines   
\begin{equation}\label{aaagpl}\ell_i^j(\Xset)_{i\in \{0,1,\ldots,n\}}, \ j=0,1,2,\end{equation}
such that the $3n+3$ lines are distinct,
$$\ell_i^0(\Xset)\cap \ell_j^1(\Xset) \cap \ell_k^2(\Xset) \cap \Xset \neq \emptyset \iff i+j+k=n$$
and
$$\Xset=\left\{x_{ijk}\ |\ x_{ijk}:=\ell_i^0(\Xset)\cap \ell_j^1(\Xset) \cap \ell_k^2(\Xset),
0\le i,j,k\le n, i+j+k=n\right\}.$$
\end{definition}

Observe that if $0\le i,j,k\le n,\ i+j+k=n$ then the three lines $\ell_i^0(\Xset), \ell_j^1(\Xset),  \ell_k^2(\Xset)$ intersect at a node $x_{ijk}\in \Xset.$ This implies that a node of $\Xset$ belongs to only one line of each of the three sets of $n+1$ lines. Therefore $|\Xset|=(n+1)(n+2)/2.$

One can find readily, as in the case of $PL_n$, the fundamental polynomial of each node $x_{ijk}\in \Xset,\ i+j+k=n:$
\begin{equation}\label{aaabbc} p_{i_0j_0k_0}^\star  =\prod_{0\le i<i_0,\ 0\le j<j_0,\ 0\le k<k_0}\ell_{i}^0(\Xset) \ell_{j}^1(\Xset)  \ell_{k}^2(\Xset).\end{equation}
Thus $\Xset$ is a $GC_n$ set.

Now let us bring a characterization for $GPL_n$ set due to Carnicer and God\'es:

\begin{theorem}[\cite{CGo06}, Thm. 3.6]\label{th:CGo06} Assume that GM Conjecture holds for all degrees up to $n-3$. Then the following statements are equivalent:
\vspace{-1.5mm} \begin{enumerate} \setlength{\itemsep}{0mm}
\item
$\Xset$ is generalized principal lattice of degree n;
\item
$\Xset$ is a $GC_n$ set with exactly $3$ maximal lines.
\end{enumerate}
\end{theorem}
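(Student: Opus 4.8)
The plan is to prove the two implications separately. The implication (i) $\Rightarrow$ (ii) I would settle by a direct incidence count, while for (ii) $\Rightarrow$ (i) I would induct on $n$ by deleting a single maximal line and invoking the inductive hypothesis on the residual $GC_{n-1}$ set.

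For (i) $\Rightarrow$ (ii): a $GPL_n$ is already known to be a $GC_n$ set from the explicit fundamental polynomials recorded above, so only the count of maximal lines needs attention. I would first note that the \emph{boundary} line $\ell_0^0(\Xset)$ passes through exactly the nodes $x_{0jk}$ with $j+k=n$, that is $n+1$ of them, and is therefore maximal; the same holds for $\ell_0^1(\Xset)$ and $\ell_0^2(\Xset)$, so there are at least three maximal lines. A family line $\ell_i^0(\Xset)$ with $i\ge1$ meets only $n+1-i\le n$ nodes and cannot be maximal, and likewise in the other two families. To exclude a maximal line $\lambda$ belonging to none of the three families, I would observe that no two of its $n+1$ nodes can share a line of the first family (else $\lambda$ would coincide with that line), so the first indices of its nodes are pairwise distinct and thus run through all of $\{0,\dots,n\}$; the same holds for the second and third indices. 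Summing the relation $i+j+k=n$ over the $n+1$ nodes then gives $n(n+1)$ on one side and $3\binom{n+1}{2}=\tfrac32 n(n+1)$ on the other, a contradiction for $n\ge1$. Hence the only maximal lines are $\ell_0^0(\Xset),\ell_0^1(\Xset),\ell_0^2(\Xset)$, which proves (ii). Note that this direction uses no hypothesis on the Gasca--Maeztu conjecture.

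For (ii) $\Rightarrow$ (i) I would argue by induction on $n$, checking the small degrees directly (where the assumption on GM is vacuous). Let $\Xset$ be a $GC_n$ set with exactly three maximal lines $\lambda_1,\lambda_2,\lambda_3$; by Corollary \ref{properties} they meet pairwise at nodes of $\Xset$ and are not concurrent, so they bound a triangle with vertices in $\Xset$. Deleting $\lambda_1$, Corollary \ref{crl:minusmax} gives that $\Xset_1:=\Xset\setminus\lambda_1$ is $GC_{n-1}$; since $\lambda_2$ and $\lambda_3$ each lose only their vertex on $\lambda_1$, they now carry $n$ nodes and so stay maximal in $\Xset_1$. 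The pivotal step is to show $\mu(\Xset_1)=3$: once $\mu(\Xset_1)\ge3$ is known, Proposition \ref{prp:CG-1} forces $\mu(\Xset_1)\in\{\mu(\Xset)-1,\mu(\Xset)\}=\{2,3\}$, hence $\mu(\Xset_1)=3$. Granting this, the inductive hypothesis (whose GM requirement, up to degree $n-4$, is covered by the assumed bound $n-3$) makes $\Xset_1$ a $GPL_{n-1}$, giving three families of $n$ lines each with $\lambda_2,\lambda_3$ as two of the boundary lines. I would then rebuild the families of $\Xset$: after labelling the nodes of $\lambda_2$ and of $\lambda_3$ compatibly with the recovered lattice on $\Xset_1$, I would take as the third family the $n+1$ lines joining the $i$-th node of $\lambda_2$ to the $i$-th node of $\lambda_3$, together with $\lambda_1$ as its boundary line, and check that these three families satisfy the incidence conditions of Definition \ref{def:GPL}.

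The hard part will be the pivotal step, namely proving that $\Xset\setminus\lambda_1$ has exactly three maximal lines. Because the theorem assumes the Gasca--Maeztu conjecture only up to degree $n-3$, Theorem \ref{thm:CG} is not available at degree $n-1$ to guarantee three maximal lines outright, so I would instead exploit the triangle structure of $\Xset$: by examining which lines the nodes of $\lambda_1$ use and how the deletion of $\lambda_1$ acts on the pencils through the two vertices $\lambda_1\cap\lambda_2$ and $\lambda_1\cap\lambda_3$, I would both locate the third maximal line of $\Xset_1$ and rule out a fourth. The GM hypothesis up to $n-3$ would be used exactly here, to control the maximal-line counts of the lower-degree $GC$ sets that appear when further maximal lines are peeled off, and it survives the induction because $n-3\ge(n-1)-3$. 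The concluding verifications — compatibility of the node labellings on $\lambda_2$ and $\lambda_3$ and the incidence axioms of Definition \ref{def:GPL} for the reconstructed families — I expect to be routine once the three-maximal-line count is secured.
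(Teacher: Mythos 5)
The paper does not prove this theorem at all: it is imported verbatim from Carnicer and God\'es (\cite{CGo06}, Thm.\ 3.6), so there is no in-paper argument to measure you against. Judged on its own terms, your implication (i) $\Rightarrow$ (ii) is correct and complete: each node $x_{ijk}$ lies on exactly one line of each family (if $x_{ijk}\in\ell_{i'}^0$ then $i'+j+k=n$ forces $i'=i$), so $\ell_0^0,\ell_0^1,\ell_0^2$ carry $n+1$ nodes each while $\ell_i^r$ with $i\ge1$ carries only $n+1-i$, and your double count $n(n+1)=3\binom{n+1}{2}$ cleanly kills any maximal line outside the three families. That half needs no GM hypothesis, as you say.

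The gap is in (ii) $\Rightarrow$ (i), and it sits exactly where you flag ``the hard part'': you never actually prove that $\mu(\Xset\setminus\lambda_1)\ge 3$. Proposition \ref{prp:CG-1} only constrains $\mu(\Xset\setminus\lambda_1)$ \emph{after} one knows it is at least $3$, and the only tool in the paper that delivers ``at least $3$'' is Theorem \ref{thm:CG}, which at degree $n-1$ requires GM up to $n-1$ — strictly more than the theorem's hypothesis of GM up to $n-3$. (This is precisely the content of Corollary \ref{crl:CG}, quoted from \cite{CGo06} as Lemma 3.4, and note that it too is stated under the stronger assumption ``GM true for all $k\le n$''.) Your substitute — ``examining which lines the nodes of $\lambda_1$ use and how the deletion acts on the pencils through the two vertices'' — is a statement of intent, not an argument: a third maximal line of $\Xset\setminus\lambda_1$ would have to be an $n$-node line of $\Xset$ missing $\lambda_1\cap\Xset$, and nothing you write produces one or excludes a fourth. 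The reconstruction step is also thinner than ``routine'': even granting that $\Xset\setminus\lambda_1$ is a $GPL_{n-1}$, you must show that the $n+1$ nodes of $\lambda_1$ are exactly the points where the extended family lines meet $\lambda_1$, i.e.\ that the joins you propose are concurrent with the appropriate diagonals of the smaller lattice and pass through nodes of $\lambda_1$; this alignment is where most of the work in \cite{CGo06} lives (via the structure of the fundamental polynomials), and it is asserted here rather than proved. So the skeleton of the induction is reasonable, but its load-bearing step is missing.
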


\section{A conjecture concerning $GC_n$ sets \label{s:conj}}

Now we are in a position to formulate and prove the corrected version of the conjecture proposed in the paper \cite{BH} of V. Bayramyan and H. H.:

\begin{conjecture}[\cite{BH}, Conj. 3.7]\label{mainc}
Assume that GM Conjecture holds for all degrees up to $n$. Let $\Xset$ be a $GC_n$ set, and
${\ell}$ be a $k$-node line, $2\le k\le n+1.$ Then we have that
\begin{equation} \label{1bbaa} \Xset_{\ell} =\emptyset,\ \hbox{or} \ \Xset_\ell \ \hbox{is an $\ell$-proper $GC_{s-2}$  subset  of $\Xset,$ hence}\  |\Xset_{\ell}| = \binom{s}{2},
\end{equation}
where $\sigma:=2k-n-1\le s \le k.$\\
Moreover, if $\sigma\ge 3$ and $\mu(\Xset) >3$ then we have that $\Xset_{\ell}\neq \emptyset.$\\
Furthermore, for any maximal line $\lambda$ we have:
$|\lambda\cap \Xset_{\ell}| = 0$ or $|\lambda\cap \Xset_{\ell}| = s-1.$
\end{conjecture}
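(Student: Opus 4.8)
The plan is to argue by induction on $n$, driving the proof with the $\ell$-reduction process of Definition \ref{def:reduct}. The two monotonicity observations that organize everything are these: removing an $\ell$-disjoint maximal line sends $(n,k)\mapsto(n-1,k)$, so that $\sigma=2k-n-1$ increases by $1$ while $k$ is unchanged, whereas removing a pair of $\ell$-adjacent maximal lines (which share a single node of $\ell$) sends $(n,k)\mapsto(n-2,k-1)$, so that $\sigma$ is unchanged while $k$ drops by $1$. Thus along any reduction $\sigma$ is non-decreasing and $k$ is non-increasing. If the reduction can be continued until $\ell$ becomes a maximal line of a $GC_{s-1}$ set $\Yset$, then $\ell$ carries exactly $s$ nodes there, so the terminal value of $k$ equals $s$ and that of $\sigma$ equals $2s-(s-1)-1=s$; monotonicity then yields $\sigma\le s\le k$ at once, while Corollary \ref{crl:minusmax} and relation \eqref{abcd} give $\Xset_\ell=\Yset\setminus\ell$, a $GC_{s-2}$ set of cardinality $\binom{s}{2}$, and Proposition \ref{proper} propagates $\ell$-properness back to $\Xset$.

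First I would dispose of the cases where a reduction step is available. If some maximal line is $\ell$-disjoint I apply Lemma \ref{lem:CG1} and the inductive hypothesis to the $GC_{n-1}$ set $\Xset\setminus\lambda$ (same $k$, with $\sigma$ raised by $1$); if some pair of maximal lines is $\ell$-adjacent I apply Lemma \ref{lem:CG2} and the inductive hypothesis to the $GC_{n-2}$ set $\Xset\setminus(\lambda'\cup\lambda'')$ (with $k-1$ and the same $\sigma$). In both situations the identity $\Xset_\ell=\Yset_\ell$ of \eqref{abcd} transports the full conclusion, and the bounds $\sigma\le s\le k$ survive because $\sigma$ only grew and $k$ only shrank. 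The base of the induction is the terminal configuration in which $\ell$ is itself maximal: then $k=n+1$, the set $\Xset_\ell=\Xset\setminus\ell$ is $GC_{n-1}$ by Corollary \ref{crl:minusmax}, and $s=\sigma=k=n+1$, so all assertions hold trivially.

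The crux is the remaining no-reduction case: $\ell$ is not maximal, yet there is neither an $\ell$-disjoint maximal line nor an $\ell$-adjacent pair. Then every maximal line meets $\ell$ at a node of $\Xset$, these meeting nodes are pairwise distinct, and none is an intersection node $\lambda_i\cap\lambda_j$ (such a node would make $\lambda_i,\lambda_j$ $\ell$-adjacent). Consequently $\mu(\Xset)\le k\le n$, so by Theorem \ref{th:CGo10} we have $\mu(\Xset)\in\{3,n-1,n\}$ and the meeting nodes must be "free" nodes. I would now invoke the explicit descriptions of these lattices --- Proposition \ref{prp:nmax} for $\mu=n$, Proposition \ref{prp:n-1max} for $\mu=n-1$, and Theorem \ref{th:CGo06} with Definition \ref{def:GPL} for $\mu=3$ --- to show that the configurations with $\mu\ge4$ cannot be stuck. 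For $\mu=n$, say, the $n$ free meeting nodes of $\ell$ lie on the three concurrent lines $\ell_1^o,\ell_2^o,\ell_3^o$, so any line distinct from these three meets them in at most three points, forcing $n\le3$, a contradiction, while $\ell=\ell_i^o$ is excluded by condition (ii) of Proposition \ref{prp:nmax}; an analogous count on the $OO$-lines rules out $\mu=n-1$. Hence a stuck non-maximal line can occur only once $\Xset$ has reduced to a generalized principal lattice, and there $\Xset_\ell=\emptyset$ precisely because $\ell$ is not among the $3n+3$ family lines (a non-maximal family line is $\ell$-reducible against the extreme line of its own family, hence not stuck). This establishes the first alternative of \eqref{1bbaa}. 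I expect this classification step --- and in particular certifying that $\mu\ge4$ never stalls --- to be the main obstacle.

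For the "moreover" clause I would track, along the reduction, both $\sigma$ (non-decreasing, hence $\ge3$ throughout) and the maximal-line count, using Proposition \ref{prp:CG-1} to see that a single $\ell$-disjoint removal lowers $\mu$ by at most $1$. Combining the previous paragraph --- a stall forces a generalized principal lattice with $\ell$ a non-family line --- with the hypotheses $\sigma\ge3$ and $\mu(\Xset)>3$, I would show that such a terminal lattice with a stuck non-family line cannot be produced, so the reduction must instead run until $\ell$ is maximal and therefore $\Xset_\ell\neq\emptyset$. Finally, the statement $|\lambda\cap\Xset_\ell|\in\{0,s-1\}$ is read off from the terminal identity $\Xset_\ell=\Yset\setminus\ell$ with $\ell$ maximal in the $GC_{s-1}$ set $\Yset$: a maximal line $\lambda$ of $\Xset$ either survives as a maximal line of $\Yset$, in which case it meets $\ell$ at exactly one node by Corollary \ref{properties}(i) and hence carries $s-1$ nodes of $\Xset_\ell$, or it does not, in which case it misses $\Xset_\ell$ altogether; I would check that this dichotomy is preserved at every reduction step so that it descends through the induction.
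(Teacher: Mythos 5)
Your skeleton — induction on $n$, reduction by $\ell$-disjoint maximal lines (Lemma \ref{lem:CG1}) and $\ell$-adjacent pairs (Lemma \ref{lem:CG2}), the monotonicity of $\sigma$ and $k$ along the reduction, and a final analysis of the ``stuck'' case via the classification of Theorem \ref{th:CGo10} — is exactly the paper's. But there is a genuine gap at the point you yourself identify as the crux. Your claim that a stuck non-maximal line forces $\mu(\Xset)=3$ is false, and the $\mu=n-1$ branch is where it fails. The count on the $OO$-lines does not rule this case out: by Proposition \ref{prp:n-1max} the ``free'' nodes lie on the three $OO$-lines \emph{except} for the three special nodes $A_1^1,A_2^2,A_3^3$, so a stuck line can carry up to one special node plus three nodes on the $OO$-lines, giving only $n-1\le 4$, i.e.\ $n\le 5$. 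The configurations $n=4,\ k=3$ and $n=5,\ k=4$ survive this count and are actually realized: Remark \ref{rm} and Fig.~\ref{pic11} construct a $GC_5$ set with four maximal lines and a stuck, non-used $4$-node line $\ell_4^*$. So you cannot conclude that being stuck happens only in a generalized principal lattice; you must instead prove directly that $\Xset_\ell=\emptyset$ in these surviving configurations. That is the content of Proposition \ref{prp:n-1}, whose $n=5$ case is the hardest step of the whole paper: one isolates the single candidate node $C=\lambda_i\cap\lambda_4$ that could still use $\ell$, determines its five used lines, and excludes the one problematic subcase ($i=1$) by showing via the Pappus hexagon theorem that it would force a degenerate $(9_3)$ configuration with too few nodes. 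Nothing in your proposal supplies this argument.

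Two secondary shortfalls. First, you let the stuck-case classification absorb the case $k=n$, but the paper imports that case as the separately proven Theorem \ref{th:corrected} (and Proposition \ref{prp:n=3} for $n=3$), and the $2$-node case also needs its own ad hoc argument; your analysis as written does not recover these. Second, in the ``Moreover'' clause the danger is that a single reduction step lands on a set with $\mu=3$, where the inductive hypothesis gives nothing; the paper closes this by combining Proposition \ref{prp:CG-1} with Theorem \ref{th:CGo10} to force $n\le 6$ and then checking the small degrees by hand. You flag this issue but do not resolve it, and it cannot be resolved by the (incorrect) claim that $\mu\ge 4$ never stalls.
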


In the last subsection we characterize constructions of $GC_n$ sets for which there are non-used $k$-node lines with $\sigma=2$ and $\mu(\Xset)>3.$

Let us mention that in the original conjecture in \cite{BH} the possibility that the set
$\Xset_{\ell}$ may be empty in \eqref{1bbaa} was not foreseen. Also we added here the statement that $\Xset_\ell$ is an $\ell$-proper $GC$ subset.

\subsection{Some known special cases of Conjecture \ref{mainc} \label{s:conj}}

The following theorem concerns  the special case $k=n$ of Conjecture \ref{mainc}. It is a corrected version of the original result in \cite{BH}: Theorem 3.3. This result was the first step toward the Conjecture \ref{mainc}.
 The corrected version appears in \cite{HV}, Theorem 3.1.
\begin{theorem}\label{thm:corrected} \label{th:corrected}
Assume that GM Conjecture holds for all degrees up to
$n$. Let $\Xset$ be a $GC_n$ set, $n \ge 1,\ n\neq 3,$ and ${\ell}$ be an $n$-node line. Then we have
that
\begin{equation} \label{2bin} |\Xset_{\ell}| = \binom{n}{2}\quad \hbox{or} \quad \binom{n-1}{2}.
\end{equation}
Moreover, the following hold:
\vspace{-1.5mm} \begin{enumerate} \setlength{\itemsep}{0mm}
\item
$|\Xset_{\ell}| = \binom{n}{2}$ if and only if there is an $\ell$-disjoint maximal line $\lambda,$ i.e., $\lambda \cap {\ell} \cap  \Xset =\emptyset.$ In this case
we have that $\Xset_{\ell}=\Xset\setminus (\lambda\cup \ell).$ Hence it is an $\ell$-proper $GC_{n-2}$ set;

\item
$|\Xset_{\ell}| = \binom{n-1}{2}$ if and only if there is a pair of $\ell$-adjacent maximal lines $\lambda', \lambda'',$ i.e., $\lambda' \cap \lambda'' \cap {\ell} \in \Xset.$
In this case we have that $\Xset_{\ell}=\Xset\setminus (\lambda'\cup \lambda''\cup \ell).$ Hence it is an $\ell$-proper $GC_{n-3}$ set.
\end{enumerate}
\end{theorem}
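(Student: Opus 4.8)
The plan is to prove Theorem \ref{thm:corrected} by induction on $n$, using the two reduction lemmas of Carnicer and Gasca (Lemmas \ref{lem:CG1} and \ref{lem:CG2}) as the driving mechanism, together with the classification of $GC_n$ sets by number of maximal lines (Theorem \ref{th:CGo10}). Since $\ell$ is an $n$-node line, the key dichotomy is whether some maximal line meets $\ell$ away from $\Xset$ (an $\ell$-disjoint maximal line) or whether two maximal lines meet $\ell$ at a common node (an $\ell$-adjacent pair). First I would establish that at least one of these two configurations must occur. By Theorem \ref{thm:CG} the set $\Xset$ has at least three maximal lines; each maximal line $\lambda$ satisfies $|\lambda\cap\Xset|=n+1$ while $|\ell\cap\Xset|=n$, so $\lambda$ and $\ell$ are distinct and by Corollary \ref{properties}(i) they intersect at a node of $\Xset$ or else $\lambda$ is $\ell$-disjoint. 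If no maximal line is $\ell$-disjoint, then every maximal line meets $\ell$ at a node of $\Xset$; since $\ell$ carries only $n$ nodes and there are at least three maximal lines, the pigeonhole principle (refined by Corollary \ref{properties}(ii), which forbids three maximal lines through one point) forces two maximal lines to meet $\ell$ at the same node, giving an $\ell$-adjacent pair.

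Once the dichotomy is secured, the two cases are handled by the corresponding lemma. In the $\ell$-disjoint case, Lemma \ref{lem:CG1} (its ``moreover'' clause, valid precisely because $\ell$ is an $n$-node line) gives $\Xset_\ell=\Xset\setminus(\lambda\cup\ell)$. Counting nodes, $\lambda$ and $\ell$ are disjoint on $\Xset$ and carry $n+1$ and $n$ nodes respectively, so $|\Xset_\ell|=\binom{n+2}{2}-(n+1)-n=\binom{n}{2}$; moreover $\Xset_\ell$ is $(n-2)$-poised, and by Corollary \ref{crl:minusmax} applied twice it is in fact a $GC_{n-2}$ set, which is $\ell$-proper by the remark following Definition \ref{def:proper}. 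In the $\ell$-adjacent case, Lemma \ref{lem:CG2} analogously yields $\Xset_\ell=\Xset\setminus(\lambda'\cup\lambda''\cup\ell)$; here $\lambda',\lambda''$ share exactly one node (their intersection, which lies on $\ell$), so $|\lambda'\cup\lambda''|\cap\Xset=2(n+1)-1=2n+1$, of which $2$ lie on $\ell$, giving $|\Xset_\ell|=\binom{n+2}{2}-(2n+1)-(n-2)=\binom{n-1}{2}$, an $\ell$-proper $GC_{n-3}$ set. This establishes both the numerical values \eqref{2bin} and the ``in this case'' descriptions in items (i) and (ii).

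The final piece is the \emph{biconditional} structure of items (i) and (ii): I must show that $|\Xset_\ell|=\binom{n}{2}$ occurs \emph{if and only if} there is an $\ell$-disjoint maximal line, and likewise for the adjacent case. One direction is the construction above. For the converse, the two values $\binom{n}{2}$ and $\binom{n-1}{2}$ are distinct (since $n\neq 3$, and one checks the degenerate small cases $n\le 2$ directly), so the two configurations are mutually exclusive in their effect on $|\Xset_\ell|$; hence observing which value $|\Xset_\ell|$ takes pins down which configuration is present. The hypothesis $n\neq 3$ is exactly what guarantees $\binom{n}{2}\neq\binom{n-1}{2}$ fails to collapse and, more substantively, rules out the exceptional $GC_3$ configuration where both an $\ell$-disjoint maximal line and an $\ell$-adjacent pair could coexist and confuse the count.

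The main obstacle I anticipate is \emph{not} the reductions themselves, which follow cleanly from the Carnicer--Gasca lemmas, but rather verifying the exhaustiveness and mutual exclusivity of the dichotomy in full generality, and in particular controlling the anomalous degree $n=3$. The delicate point is that for small $n$ the number of maximal lines can be as low as three (by Theorem \ref{th:CGo10} the possible values are restricted), so the pigeonhole argument for producing an $\ell$-adjacent pair is tight and must be checked to never fail; and one must confirm that an $\ell$-disjoint maximal line and an $\ell$-adjacent pair cannot simultaneously force two different cardinalities for the same $\Xset_\ell$. Because $\Xset_\ell$ is determined intrinsically by $\Xset$ and $\ell$, any valid reduction must yield the same set, so the real work is showing that whichever configuration one detects, it produces the cardinality matching that configuration uniquely --- and this is precisely where the excluded value $n=3$ must be quarantined, presumably by direct inspection of the $GPL_3$ and small-maximal-line cases via Theorem \ref{th:CGo06} and Proposition \ref{prp:nmax}.
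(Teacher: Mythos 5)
First, note that the paper does not actually prove Theorem \ref{th:corrected}: it imports it from \cite{HV} (Theorem 3.1), so there is no in-paper proof to compare against; I therefore assess your argument on its own merits.

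Your reduction of the theorem to two tasks is the right frame: (a) show that either an $\ell$-disjoint maximal line or an $\ell$-adjacent pair must exist, and (b) apply the ``moreover'' clauses of Lemmas \ref{lem:CG1} and \ref{lem:CG2} to get the two cardinalities and the mutual exclusivity of the two cases. Part (b) is essentially complete (modulo a slip: in the adjacent case $\ell$ contributes $n-1$ new nodes, not $n-2$, to the subtraction; the correct count is $\binom{n+2}{2}-(2n+1)-(n-1)=\binom{n-1}{2}$). But part (a), which is the entire substance of the theorem, rests on a pigeonhole argument that is simply false: knowing only that there are \emph{at least three} maximal lines (Theorem \ref{thm:CG}) while $\ell$ carries $n\ge 3$ nodes forces nothing --- three maximal lines can perfectly well meet $\ell$ at three distinct nodes. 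This is not a hypothetical: the paper's set $\Yset^*$ with the line $\ell_3^*$ realizes exactly this configuration at $n=3$, and there $\Xset_{\ell}=\emptyset$. So the reason $n=3$ is excluded is \emph{not} that $\binom{3}{2}=\binom{2}{2}$ (it doesn't) nor that the two configurations could coexist; it is that \emph{neither} configuration need occur, and then the line is unused. Your diagnosis of the role of $n\neq 3$ is therefore wrong, and the dichotomy is exactly the part you have not proved.

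To close the gap one must rule out, for $n\ge 4$, the configuration in which every maximal line meets $\ell$ at a node of $\Xset$ and all these nodes are distinct. That forces $\mu(\Xset)\le n$, hence $\mu(\Xset)\in\{3,n-1,n\}$ by Theorem \ref{th:CGo10}, and each value needs a separate structural argument. For $\mu=n$ one uses Proposition \ref{prp:nmax}: all $n$ nodes of $\ell$ would have to be ``free'' nodes lying on the three concurrent lines $\ell_i^o$, which caps $n$ at $3$. For $\mu=n-1$ one argues similarly from Proposition \ref{prp:n-1max} that $\ell$ could carry at most $4$ nodes, again forcing $n$ too small. The hardest case is $\mu=3$: by Theorem \ref{th:CGo06} the set is a generalized principal lattice, and one must show that for $n\ge 4$ no $n$-node line can meet all three maximal lines at three distinct nodes (equivalently, by Proposition \ref{prop:3max}, that every $n$-node line in a $GPL_n$ with $n\ge4$ is used, hence is a lattice line $\ell_1^r$, which is automatically disjoint from the maximal line $\ell_0^r$ of its own family). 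None of this is in your proposal; you flag it as ``the main obstacle'' and defer it to ``direct inspection,'' but it is precisely where the theorem lives, and the $n=3$ counterexample shows it cannot be waved through.
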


Next let us bring a
characterization for the case $n=3,$ which is not covered in above Theorem (see  \cite{HT}, Corollary 6.1).
\begin{proposition}[\cite{HV}, Prop. 3.3]\label{prp:n=3}
Let $\Xset$ be a $3$-poised set and ${\ell}$ be a $3$-node line. Then we have
that
\begin{equation} \label{2bin} |\Xset_{\ell}| = 3,\quad 1,\quad\hbox{or} \quad 0.
\end{equation}
Moreover, the following hold:
\vspace{-1.5mm} \begin{enumerate} \setlength{\itemsep}{0mm}
\item
$|\Xset_{\ell}| = 3$ if and only if there is a maximal line $\lambda_0$ such that $\lambda_0 \cap {\ell} \cap  \Xset =\emptyset.$ In this case
we have that $\Xset_{\ell}=\Xset\setminus (\lambda_0\cup \ell).$ Hence it is an $\ell$-proper $GC_{1}$ set.
\item
$|\Xset_{\ell}| = 1$ if and only if there are two maximal lines $\lambda', \lambda'',$ such that $\lambda' \cap \lambda'' \cap {\ell} \in \Xset.$
In this case we have that $\Xset_{\ell}=\Xset\setminus (\lambda'\cup \lambda''\cup \ell).$ Hence it is an $\ell$-proper $GC_{0}$ set.
\item $|\Xset_{\ell}| = 0$ if and only if there are exactly three maximal lines in $\Xset$ and they intersect $\ell$ at three distinct nodes.
\end{enumerate}
\end{proposition}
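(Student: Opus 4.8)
The plan is to split the analysis according to the way the maximal lines of $\Xset$ meet the $3$-node line $\ell.$ Here $n=3,$ so $|\Xset|=\binom{5}{2}=10,$ $\sigma=2k-n-1=2,$ and the target cardinalities $3,1,0$ are exactly $\binom{3}{2},$ $\binom{2}{2},$ and the empty value, corresponding to $s=3,$ $s=2,$ and $\Xset_\ell=\emptyset.$ Since $\ell$ and a maximal line $\lambda$ are distinct lines, they meet in a single point, which lies in $\Xset$ precisely when it is one of the three nodes of $\ell.$ Thus every maximal line is either $\ell$-disjoint or meets $\ell$ at a node of $\Xset,$ and I distinguish three configurations: (I) some maximal line is $\ell$-disjoint; (II) no maximal line is $\ell$-disjoint, but two of them are $\ell$-adjacent; (III) no maximal line is $\ell$-disjoint and no two are $\ell$-adjacent.

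In configuration (I) the $\ell$-disjoint maximal line $\lambda_0,$ together with the fact that $\ell$ is an $n$-node line, lets me invoke the second part of Lemma \ref{lem:CG1}: $\Xset_\ell=\Xset\setminus(\lambda_0\cup\ell),$ which is $(n-2)=1$-poised. A direct count ($4$ nodes on $\lambda_0,$ $3$ on $\ell,$ disjoint) gives $|\Xset_\ell|=10-7=3=\binom{3}{2},$ and by the note following Lemmas \ref{lem:CG1} and \ref{lem:CG2} it is an $\ell$-proper $GC_1$ subset. In configuration (II) the $\ell$-adjacent maximal lines $\lambda',\lambda''$ meet $\ell$ at a common node, and the second part of Lemma \ref{lem:CG2} (applicable since $3\le|\ell\cap\Xset|=3\le n$) yields $\Xset_\ell=\Xset\setminus(\lambda'\cup\lambda''\cup\ell),$ which is $(n-3)=0$-poised; counting ($4+4-1$ nodes in $\lambda'\cup\lambda'',$ plus $2$ further nodes on $\ell$) gives $|\Xset_\ell|=10-9=1=\binom{2}{2},$ an $\ell$-proper $GC_0$ subset.

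Configuration (III) is the empty case. If no maximal line is $\ell$-disjoint and no two are $\ell$-adjacent, then every maximal line meets $\ell$ at a node of $\Xset$ and all these meeting nodes are distinct; since $\ell$ carries only $3$ nodes this forces $\mu(\Xset)\le 3,$ and together with $\mu(\Xset)\ge 3$ it gives $\mu(\Xset)=3,$ the three maximal lines meeting $\ell$ at its three distinct nodes. To conclude $\Xset_\ell=\emptyset$ I would use Theorem \ref{th:CGo06}: a $GC_3$ set with exactly three maximal lines is a $GPL_3,$ whose three maximal lines are the index-$0$ members $\ell_0^0,\ell_0^1,\ell_0^2$ of the three families. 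By the fundamental-polynomial formula \eqref{aaabbc}, every used line must be one of the $3(n+1)$ family lines. But a $3$-node line meeting all three maximal lines at distinct nodes cannot be a family line: a non-maximal family-$j$ line shares no node with the maximal line $\ell_0^j$ of its own family (their nodes carry different $j$-th index), while the maximal family lines are $4$-node lines, not $3$-node lines. Hence $\ell$ is unused.

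Finally, the three configurations are exhaustive, so each set realizes at least one of them, and each configuration forces one of the distinct values $3,1,0$; since $|\Xset_\ell|$ is a single number, each set realizes exactly one configuration and the value determines it, which establishes all the ``if and only if'' assertions. I expect the main obstacle to be the empty case: securing $\mu(\Xset)\ge 3$ and the applicability of the $GPL$ classification for a general $3$-poised set rather than a $GC_3$ set is the delicate step, since the structural tools of Section \ref{ss:GMconj} are phrased for $GC$ sets; this is where one must either reduce the $3$-poised hypothesis to the $GC_3$ case or argue the non-usage directly from the configuration of the three maximal lines meeting $\ell.$
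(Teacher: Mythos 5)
The paper itself offers no proof of this proposition --- it is imported verbatim from \cite{HV} --- so your argument can only be measured against the machinery the paper deploys elsewhere, and there it fits well. Your cases (I) and (II) are precisely the ``Moreover'' parts of Lemmas \ref{lem:CG1} and \ref{lem:CG2} specialized to $n=3$; the node counts ($10-4-3=3$ and $10-(4+4-1)-2=1$) are correct, and the $\ell$-properness claims are covered by the remark following Definition \ref{def:proper}. Your closing logic for the biconditionals is also sound: an $\ell$-disjoint maximal line and an $\ell$-adjacent pair cannot coexist, since the two lemmas would then force $|\Xset_{\ell}|$ to equal both $3$ and $1$, so the three configurations are genuinely exclusive and the observed cardinality determines which one occurs.

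The genuine gap is the one you flag in your last sentence but do not close: configuration (III) under the literal hypothesis that $\Xset$ is only $3$-poised. Both ingredients you invoke there --- $\mu(\Xset)\ge 3$ via Theorem \ref{thm:CG}, and the generalized-principal-lattice classification of Theorem \ref{th:CGo06} --- are statements about $GC_3$ sets, and neither is available for a bare $3$-poised set: ten points in general position, three of which are slid onto a common line while poisedness is preserved, give a $3$-poised set with a $3$-node line, no maximal line at all, and (generically) $\Xset_{\ell}=\emptyset$, so the ``exactly three maximal lines'' characterization in part (iii) has nothing to bite on. In other words, part (iii) and the $\ell$-properness assertions only make sense under the $GC_3$ reading of the hypothesis, which is evidently the intended one --- the paper remarks immediately after the proposition that only the bare cardinality statement, without the ``Moreover'' part, survives for general poised sets, and ``$\ell$-proper'' is itself only defined for $GC$ sets. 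Under that reading your configuration-(III) argument is correct (and could be compressed to a citation of the ``Furthermore'' part of Proposition \ref{prop:3max} instead of redoing the $GPL$ computation), but as written you have identified the obstruction without resolving it, so the proof is incomplete at exactly that point.
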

Let us mention that the statement \eqref{2bin} of Proposition \ref{prp:n=3} (without  the ``Moreover" part)  is valid for $3$-node lines in any  $n$-poised set (see \cite{HT}, Corollary 6.1). More precisely the following statement holds:
\begin{equation*} \hbox{If $\Xset$ is an $n$-poised set and $\ell$  is a $3$-node line then\ }  |\Xset_{\ell}| = 3,\quad 1,\quad\hbox{or} \quad 0.
\end{equation*}
Note that this statement contains all conclusions of Conjecture \ref{mainc} for the case of $3$-node lines, except the claim that the set $\Xset_\ell$ is an $\ell$-proper $GC_n$  subset in the cases $ |\Xset_{\ell}| = 3,1.$
And for this reason we cannot use it in proving Conjecture  \ref{mainc}.

Let us mention that, in view of the relation \eqref{maxaaa}, Conjecture \ref{mainc} is true if the line $\ell$ is a maximal line. Also Conjecture \ref{mainc} is true in the case when $GC_n$ set $\Xset$ is a Chung -Yao lattice.
Indeed, in this lattice the only used lines are the maximal lines. Also for any $k$-node line in $\Xset$ with $k\le n$ we have that $2k\le n+2,$ since through any node there pass two maximal lines. Thus for these non-used $k$-node lines we have $\sigma\le 1$ (see \cite{BH}).

The following proposition reveals a rich structure of the Carnicer-Gasca lattice.
\begin{proposition}[\cite{BH}, Prop. 3.8]\label{CGl}
Let $\Xset$ be a Carnicer-Gasca lattice of degree $n$ and
${\ell}$ be a $k$-node line. Then we have that
\begin{equation} \label{2bba}  \Xset_\ell \ \hbox{is an $\ell$-proper $GC_{s-2}$  subset  of $\Xset,$ hence}\  |\Xset_{\ell}| = \binom{s}{2},
\end{equation}
where $\sigma:=2k-n-1\le s \le k.$\\
Moreover, for any maximal line $\lambda$ we have:
$|\lambda\cap \Xset_{\ell}| = 0$ or $|\lambda\cap \Xset_{\ell}| = s-1.$\\
Furthermore, for each $n, k$ and $s$ with $\sigma\le s \le k,$  there is a Carnicer-Gasca lattice of degree $n$ and a $k$-node line $\ell$ such that \eqref{2bba} is satisfied.
\end{proposition}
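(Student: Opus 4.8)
The plan is to exploit the explicit fundamental polynomials of a Carnicer-Gasca lattice to determine $\Xset_\ell$ exactly, and then to realize it by a chain of $\ell$-reductions. Write $\Lset=\{\lambda_1,\dots,\lambda_{n+1}\}$ for the $n+1$ maximal lines (in general position), let $A_m\in\lambda_m$ be the free node on $\lambda_m$, and for $i\neq j$ let $\gamma_{ij}$ be the line through $A_i$ and $A_j$. From the fundamental polynomials recalled in the description of lattices with $n+1$ maximal lines, a free node uses only maximal lines, whereas the intersection node $\lambda_i\cap\lambda_j$ uses exactly the $n-1$ lines $\lambda_m$ ($m\neq i,j$) together with $\gamma_{ij}$. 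Since $\ell$ is not maximal (we may assume $k\le n$, the case $k=n+1$ being covered by \eqref{maxaaa}), the first step is to conclude that no free node uses $\ell$, and that $\lambda_i\cap\lambda_j$ uses $\ell$ if and only if $\gamma_{ij}=\ell$, i.e. if and only if both $A_i,A_j\in\ell$. Putting $F:=\{m:A_m\in\ell\}$ and $s:=|F|$, this gives $\Xset_\ell=\{\lambda_i\cap\lambda_j:i,j\in F,\ i\neq j\}$ and hence $|\Xset_\ell|=\binom{s}{2}$; in particular $\Xset_\ell=\emptyset$ precisely when $s\le 1$, and I treat the substantive case $s\ge 2$ below.

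Next I would record how the maximal lines meet $\ell$. As $\ell$ is not maximal, each $\lambda_m$ meets $\ell$ in a single point, which is either a free node on $\ell$ (the $s$ lines indexed by $F$), an intersection node on $\ell$ (lines occurring in $\ell$-adjacent pairs through the $b:=k-s$ intersection nodes of $\ell$), or a non-node (the remaining $d:=n+1-s-2b$ lines, which are $\ell$-disjoint). Here the inequality $s+2b\le n+1$ is exactly $s\ge\sigma=2k-n-1$, while $s\le k$ follows from $b\ge 0$. To obtain the $\ell$-proper conclusion I would then exhibit $\Xset\searrow_\ell\Yset$ explicitly: remove the $d$ $\ell$-disjoint maximal lines one at a time, and then remove the $b$ pairs of $\ell$-adjacent maximal lines. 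At each stage Corollary~\ref{crl:minusmax} keeps the remaining lines of $\Lset$ maximal (a surviving $\lambda_m$ loses one node per removed line and two per removed pair, matching the drop in degree), an $\ell$-disjoint line stays $\ell$-disjoint, and an $\ell$-adjacent pair stays $\ell$-adjacent, because $\ell$ is fixed throughout. Tracking the two parameters, the degree falls to $s-1$ while $\ell$ retains $k-b=s$ nodes, so $\ell$ becomes maximal in the resulting $GC_{s-1}$ set $\Yset$; moreover the nodes $\lambda_i\cap\lambda_j$ with $i,j\in F$ survive (they lie on no discarded line) and lie off $\ell$, whence $\Xset_\ell=\Yset\setminus\ell$ is an $\ell$-proper $GC_{s-2}$ subset.

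For the assertion on maximal lines, every maximal line of $\Xset$ is some $\lambda_m$, and by general position $\lambda_m$ contains $\lambda_i\cap\lambda_j$ (with $i,j\in F$) only if $m\in\{i,j\}$. Thus if $m\notin F$ then $\lambda_m\cap\Xset_\ell=\emptyset$, while if $m\in F$ the nodes $\lambda_m\cap\lambda_{m'}$ with $m'\in F\setminus\{m\}$ give exactly $s-1$ elements of $\Xset_\ell$ on $\lambda_m$; this is the dichotomy $|\lambda\cap\Xset_\ell|\in\{0,s-1\}$. For the realizability statement, given $n,k,s$ with $\sigma\le s\le k$, I would build a lattice in which $\ell$ carries the intended incidences: take $n+1$ lines in general position, place the free nodes of $s$ of them at their intersections with $\ell$, arrange $\ell$ to pass through $b=k-s$ intersection nodes formed by $2b$ further lines, and let the remaining $n+1-(2k-s)\ge 0$ lines meet $\ell$ at non-nodes, choosing all free nodes off $\ell$ generically. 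The count above then yields $|\Xset_\ell|=\binom{s}{2}$ for this $k$-node line $\ell$.

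I expect the main obstacle to be the genericity bookkeeping in the last construction, together with checking that it is internally consistent: one must place the lines and free nodes so that $\ell$ meets exactly the prescribed $k$ nodes with no accidental extra incidences, so that the $s$ designated free nodes are genuine free nodes (not intersection nodes), and so that the full set of $n+1$ free nodes is non-collinear, all while keeping the $n+1$ lines in general position. These are finitely many constraints that a dimension count shows are simultaneously satisfiable, but organizing the argument cleanly is the delicate part; by contrast, the validity of the reduction steps after earlier removals is automatic because $\ell$ never moves.
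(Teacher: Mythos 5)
Your argument is correct. Note, however, that the paper itself gives no proof of Proposition \ref{CGl}: it is imported from \cite{BH} with only the remark that the ``$\ell$-proper'' clause follows from the proof there, so there is no in-paper argument to compare against. What you do is the natural route and almost certainly the intended one: read off $\Xset_\ell=\{\lambda_i\cap\lambda_j : i,j\in F\}$ from the explicit fundamental polynomials of the Carnicer--Gasca lattice already recorded in the paper's classification section, derive $\sigma\le s\le k$ from the incidence count $s+2(k-s)\le n+1$, and then realize the $\ell$-properness by deleting the $d$ $\ell$-disjoint maximal lines and the $b=k-s$ $\ell$-adjacent pairs, checking that maximality, disjointness and adjacency persist at each stage. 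The only loose ends are minor: the degenerate case $s\le 1$, where $\Xset_\ell=\emptyset$ and the phrase ``$\ell$-proper $GC_{s-2}$ subset'' becomes vacuous (an imprecision already present in the statement, which unlike Conjecture \ref{mainc} does not list the empty case separately), and the genericity bookkeeping in the realizability construction, which you correctly identify as routine but unproved in your sketch.
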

Note that the phrase ``$\ell$-proper" is not present in the formulation of Proposition in \cite{BH} but it follows readily from the proof there.

Next consider the following statement of Carnicer and Gasca (see \cite{CG03}, Proposition 4.2):
\begin{equation} \label{2nodel}\hbox{If $\Xset$ is a $GC_n$ set and $\ell$  is a $2$-node line then\ }  |\Xset_{\ell}| = 1\  \ \hbox{or}\ \ 0.
\end{equation}

Let us adjoin this with the following statement:

If $\Xset$ is a $GC_n$ set, $\ell$  is a $2$-node line, and $|\Xset_\ell|=1,$ then $\Xset_\ell$ is an $\ell$-proper $GC_0$ subset, provided that $GM$ conjecture is true for all degrees up to $n.$

Indeed, suppose that $\Xset_\ell=\{A\}$ and $\ell$ passes through the nodes $B,C\in\Xset.$  The node $A$ uses a maximal $(n+1)$-node line in $\Xset$ which we denote by $\lambda_0.$ Next, $A$ uses a maximal $n$-node line in $\Xset\setminus \lambda_0$ which we denote by $\lambda_1.$ Continuing this way we find consecutively the lines $\lambda_2, \lambda_3,\ldots, \lambda_{n-1}$ and obtain that
$$\{A\}=\Xset\setminus (\lambda_0\cup \lambda_1\cup \cdots \cup \lambda_{n-1}).$$
To finish the proof it suffices to show that $\lambda_{n-1}=\ell$ and the remaining lines  $\lambda_i, i=0,\ldots,n-2$ are $\ell$-disjoint.
Indeed, the node $A$ uses $\ell$ and since it is a $2$-node line it may coincide only with the last maximal line $\lambda_{n-1}.$
Now, suppose conversely that a maximal line $\lambda_k,\ 0\le k\le n-2,$ intersects $\ell$ at a node, say $B.$ Then consider the polynomial of degree $n:$
$$p=\ell_{A,C}\prod_{i\in\{0,\ldots,n-1\}\setminus\{k\}}\lambda_i,$$ where $\ell_{A,C}$ is the line through $A$ and $C.$
Clearly $p$ passes through all the nodes of $\Xset$ which contradicts Proposition \ref{prp:poised}.

Now, in view of the statement \eqref{2nodel} and the adjoint statement, we conclude that Conjecture \ref{mainc} is true for the case of $2$-node lines in any $GC_n$ sets.

It is worth mentioning that the statement \eqref{2nodel} is true also for any $n$-poised set $\Xset$ (see \cite{BH}, relation (1.4), due to V. Bayramyan).

\subsection{Some preliminaries for the proof of Conjecture \ref{mainc} \label{s:conj}}

Here we prove two propositions. The following proposition shows that Conjecture \ref{mainc} is true if the node set $\Xset$ has exactly $3$ maximal lines.

\begin{proposition}\label{prop:3max}
Assume that GM Conjecture holds for all degrees up to $n-3$. Let $\Xset$ be a $GC_n$ set with exactly $3$ maximal lines, and
${\ell}$ be an  $m$-node line, $2\le m\le n+1.$ Then we have that
\begin{equation} \label{1bin3} \Xset_{\ell}=\emptyset,\ \hbox{or} \ \Xset \ \hbox{is a proper $GC_{m-2}$  subset of $\Xset,$ hence}\  |\Xset_{\ell}| = \binom{m}{2}.
\end{equation}
Moreover, if $\Xset_{\ell}\neq\emptyset$ and $m\le n$ then for a maximal line $\lambda_1$ of $\Xset$ we have:
\vspace{-1.5mm} \begin{enumerate} \setlength{\itemsep}{0mm}
\item
$\lambda_1 \cap {\ell} \notin \Xset$ and $|\lambda_1\cap \Xset_{\ell}| = 0.$

\item
$|\lambda\cap \Xset_{\ell}| = m-1\ $
for the remaining two maximal lines.
\end{enumerate}
Furthermore, if the line $\ell$ intersects each maximal line at a node then $\Xset_\ell=\emptyset.$
\end{proposition}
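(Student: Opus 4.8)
The plan is to remove the qualitative hypothesis ``exactly three maximal lines'' in favour of the explicit coordinate description of a generalized principal lattice, and then to read off every assertion from the product form of the fundamental polynomials. Concretely, I would first invoke Theorem \ref{th:CGo06} (legitimate, since GM holds up to $n-3$) to write $\Xset$ as a $GPL_n$ with three families of lines $\{\ell_i^0\}_{i=0}^n$, $\{\ell_j^1\}_{j=0}^n$, $\{\ell_k^2\}_{k=0}^n$ and nodes $x_{ijk}$ with $i+j+k=n$ as in Definition \ref{def:GPL}. Since $\ell_0^0,\ell_0^1,\ell_0^2$ each carry $n+1$ nodes, these are the three maximal lines. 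The decisive structural remark is that, by the fundamental-polynomial formula \eqref{aaabbc}, every linear factor of every $p^\star_{x_{ijk}}$ is one of these $3n+3$ grid lines; hence $\Xset_\ell\neq\emptyset$ forces $\ell$ to coincide with a grid line. By the symmetry of the three families I may assume $\ell=\ell_a^0$, and since $\ell$ is an $m$-node line while $\ell_a^0$ carries exactly $n-a+1$ nodes, necessarily $a=n+1-m$ (so $a\ge1$ exactly when $m\le n$).

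From \eqref{aaabbc} a node $x_{ijk}$ uses $\ell_a^0$ if and only if $i>a$. Counting such nodes gives $\sum_{i=a+1}^{n}(n-i+1)=\binom{m}{2}$, hence $|\Xset_\ell|=\binom{m}{2}$. The same description yields the ``Moreover'' part immediately: two lines of the same family meet at no node of $\Xset$, so $\ell_0^0$ is $\ell$-disjoint with $|\ell_0^0\cap\Xset_\ell|=0$; whereas $\ell_0^1$ and $\ell_0^2$ meet $\ell$ at the distinct nodes $x_{a,0,n-a}$ and $x_{a,n-a,0}$, and each of these two maximal lines contains exactly $m-1$ nodes with $i>a$, giving $|\lambda\cap\Xset_\ell|=m-1$.

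To obtain the $\ell$-proper $GC_{m-2}$ conclusion I would carry out the $\ell$-reduction of Definition \ref{def:reduct} explicitly, deleting in turn the family-$0$ maximal lines $\ell_0^0,\ell_1^0,\ldots,\ell_{a-1}^0$. Each is maximal in the current set and, lying in the same family as $\ell_a^0$, is $\ell$-disjoint; and $\ell$ keeps all its $m\ge2$ nodes throughout, so condition (iii) of Definition \ref{def:reduct} holds. The one point needing care, and the main technical step, is to verify that deleting a family-$0$ maximal line from a $GPL$ again produces a $GPL$ of degree one lower: this follows by the reindexing $i\mapsto i-1$ on the surviving nodes $\{x_{ijk}:i\ge1\}$, which shows the next family-$0$ line is again maximal. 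After the $a=n+1-m$ deletions the set is a $GPL_{m-1}$ in which $\ell=\ell_a^0$ is maximal, so by Definition \ref{def:proper} together with \eqref{maxaaa} the set $\Xset_\ell=\Xset\setminus(\ell_0^0\cup\cdots\cup\ell_{a-1}^0\cup\ell)$ is an $\ell$-proper $GC_{m-2}$ subset.

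Finally, for the ``Furthermore'' statement I would argue by contraposition: if $\Xset_\ell\neq\emptyset$ and $\ell$ is non-maximal, then $\ell=\ell_a^0$ with $a\ge1$ (or, by symmetry, a family-$1$ or family-$2$ analogue), and such a line is disjoint from the maximal line of its own family; hence $\ell$ cannot meet all three maximal lines at nodes of $\Xset$. Consequently, if $\ell$ does intersect each maximal line at a node, then $\Xset_\ell=\emptyset$. I expect the entire difficulty to reside in the $GPL$-preservation bookkeeping of the reduction step; the cardinality count and the incidence facts are immediate consequences of the lattice coordinates.
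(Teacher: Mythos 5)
Your proposal is correct and follows essentially the same route as the paper: invoke Theorem \ref{th:CGo06} to realize $\Xset$ as a $GPL_n$, read off from the product formula \eqref{aaabbc} that the only usable $m$-node lines are the grid lines $\ell_{n-m+1}^r$, count the users to get $\binom{m}{2}$, and obtain the $\ell$-proper structure and the ``Moreover''/``Furthermore'' parts from the same lattice description. You merely supply more detail than the paper (notably the explicit verification that the successive family-$0$ deletions constitute a valid $\ell$-reduction), which is a welcome elaboration rather than a divergence.
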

\begin{proof} According to Theorem \ref{th:CGo06} the set $\Xset$ is a generalized principal lattice of degree n with some three sets of $n+1$ lines \eqref{aaagpl}.  Then we obtain from \eqref{aaabbc} that the only used lines in $\Xset$ are the lines $\ell_{s}^r(\Xset),$ where  $0\le s< n,\ r=0,1,2.$ Therefore
the only used $m$-node lines are the lines $\ell_{n-m+1}^r(\Xset),\ r=0,1,2.$ Consider the line, say with $r=0,$ i.e., $\ell\equiv \ell_{n-m+1}^0(\Xset).$ It is used by all the nodes $x_{ijk}\in \Xset$ with $i>n-m+1,$ i.e., $i=n-m+2, n-m+3,\ldots,n.$ Thus, $\ell$ is used by exactly $\binom{m}{2}=(m-1)+(m-2)+\cdots+1$ nodes.
This implies also that $\Xset_\ell= \Xset\setminus (\ell_0\cup \ell_1\cup\cdots\cup \ell_{n-m+1}).$ Hence $\Xset$ is a proper $GC_{m-2}$  subset of $\Xset.$ The part ``Moreover" also follows readily from here. Now it remains to notice that the part ``Furthermore" is a straightforward consequence of the part ``Moreover".
\end{proof}

The following statement on the presence and usage of $(n-1)$-node lines in $GC_n$ sets with exactly $n-1$ maximal lines will be used in the sequel (cf. Proposition 4.2, \cite{HV}).
\begin{proposition}\label{prp:n-1}
Let $\Xset$ be a $GC_n$ set with exactly $n-1$ maximal lines and $\ell$ be an $(n-1)$-node line, where $n\ge 4.$ Assume also that
through each node of $\ell$ there passes exactly one maximal line.
Then we have that either
$n=4$  or $n=5.$ Moreover, in both these cases we have that $\Xset_\ell=\emptyset.$
\end{proposition}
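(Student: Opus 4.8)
The plan is to run the argument entirely inside the explicit structure of $GC_n$ sets with exactly $n-1$ maximal lines furnished by Proposition \ref{prp:n-1max}, so I would fix the maximal lines $\lambda_1,\ldots,\lambda_{n-1}$, the free nodes $A_i^j$, the outside nodes $O_1,O_2,O_3$, and the three $OO$ lines $\ell_1^{oo},\ell_2^{oo},\ell_3^{oo}$, normalized as in that proposition. The first observation is that every node of $\ell$ must be a free node: by hypothesis each node of $\ell$ lies on exactly one maximal line, which excludes the intersection nodes of $\Xset^\times$ (on two maximal lines) and the outside nodes (on none). Since there are $n-1$ maximal lines and $\ell$ meets each in at most one point, the $n-1$ nodes of $\ell$ are distributed one per maximal line; in particular every maximal line meets $\ell$ at a free node, so no maximal line is $\ell$-disjoint and no two are $\ell$-adjacent.

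The heart of the bound is to limit how these $n-1$ free nodes can be selected. By Proposition \ref{prp:n-1max}(i) each free node is either one of the three distinguished nodes $A_1^1,A_2^2,A_3^3$ or lies on one of the $OO$ lines. First I would show that $\ell$ passes through at most one of $A_1^1,A_2^2,A_3^3$: if it contained, say, $A_1^1$ and $A_2^2$, then by the collinearity of $\{O_3,A_1^1,A_2^2\}$ in Proposition \ref{prp:n-1max}(iii) the line $\ell$ would coincide with the line through $A_1^1$ and $A_2^2$, hence would pass through the outside node $O_3\in\Xset$, contradicting that every node of $\ell$ lies on a maximal line. Second, since $\ell$ is distinct from each $\ell_i^{oo}$ (each $OO$ line contains outside nodes, again forbidden on $\ell$), the line $\ell$ meets each $OO$ line in at most one node, so at most three nodes of $\ell$ lie on $OO$ lines. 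Combining, $n-1=|\ell\cap\Xset|\le 1+3=4$, whence $n\le 5$; together with $n\ge 4$ this gives $n\in\{4,5\}$. Note that for $n=5$ equality forces exactly one distinguished node of $\ell$ and exactly three nodes on the $OO$ lines, one per line.

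For the vanishing of $\Xset_\ell$ when $n=4$ the argument is immediate: then $\Xset$ has exactly three maximal lines, so by Theorem \ref{th:CGo06} it is a generalized principal lattice, and since $\ell$ meets each of the three maximal lines at a node, the \emph{Furthermore} clause of Proposition \ref{prop:3max} yields $\Xset_\ell=\emptyset$. The case $n=5$ is the genuine obstacle. Here I would descend by removing a maximal line: choosing $\lambda_1$ to carry the unique distinguished node of $\ell$, the set $\Xset\setminus\lambda_1$ is a $GC_4$ set in which $\ell$ is a $3$-node line and, by relation \eqref{rep}, $\Xset_\ell\setminus\lambda_1=(\Xset\setminus\lambda_1)_\ell$. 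The key point is that the $OO$ line $\ell_1^{oo}$, which avoids $\lambda_1$ and was only an $n$-node line in $\Xset$, becomes maximal in $\Xset\setminus\lambda_1$ and meets $\ell$ at the same node as one of the surviving maximal lines $\lambda_j$, producing an $\ell$-adjacent pair; applying Lemma \ref{lem:CG2} reduces the problem to a $GC_2$ set on which $\ell$ is a $2$-node line, where statement \eqref{2nodel} applies.

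The main difficulty I anticipate is precisely finishing the case $n=5$: after the reduction one must rule out the ``used by one node'' alternative allowed by \eqref{2nodel}, and one must separately control any user of $\ell$ lying on the removed line $\lambda_1$, which relation \eqref{rep} does not see. I expect to resolve both points by repeating the reduction after deleting a different maximal line and exploiting the near-symmetry of the configuration under the bijection between the lines $\lambda_j$ and the $OO$ lines, so that every potential user of $\ell$ is eventually forced onto a deleted maximal line while the residual $2$-node set is pinned into the unused alternative of \eqref{2nodel}.
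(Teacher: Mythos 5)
Your opening analysis is correct and coincides with the paper's: every node of $\ell$ is a ``free'' node, $\ell$ contains at most one of the special nodes $A_1^1,A_2^2,A_3^3$ (two would force $\ell$ through an outside node by Proposition \ref{prp:n-1max}(iii)) and at most one node on each $OO$ line, giving $n-1\le 4$; and the $n=4$ case is settled exactly as in the paper via the ``Furthermore'' part of Proposition \ref{prop:3max}. The genuine gap is the case $n=5$, which you yourself flag as unresolved. Your reduction (delete $\lambda_1$, observe that $\ell_1^{oo}$ becomes maximal and forms an $\ell$-adjacent pair with some $\lambda_j$, apply Lemma \ref{lem:CG2} and then \eqref{2nodel}) is sound as far as it goes, but it can only ever localize $\Xset_\ell$ to at most one node plus the uncontrolled nodes of the deleted line $\lambda_1$. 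The paper performs a sharper pair of reductions --- deleting $\lambda_i$ (where $\ell\cap\lambda_4=:B\in\ell_i^{oo}$) to kill all of $\lambda_4$ except $C:=\lambda_i\cap\lambda_4$, and deleting $\lambda_4$ to get a three-maximal-line set in which $\ell$ meets every maximal line at a node, whence $(\Xset\setminus\lambda_4)_\ell=\emptyset$ --- and thereby pins $\Xset_\ell\subset\{C\}$ for a single explicit node $C$.

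What no amount of ``repeating the reduction with a different maximal line'' will give you is the final step: deciding whether that one remaining candidate $C$ actually uses $\ell$. This is not a combinatorial question answerable by Lemmas \ref{lem:CG1}--\ref{lem:CG2}; the paper resolves it by writing out the five lines in the fundamental polynomial of $C$ (two maximal lines, two $OO$ lines, and the line through $B$ and $A_i^i$), observing that the fifth line equals $\ell$ precisely when $i=1$, and then excluding the configuration $i=1$ by the Pappus hexagon theorem: Pappus applied to the triples $A_1^1,E,F$ and $O_1,A_2^2,A_3^3$ forces $\ell_1^{oo}$ to pass through $\lambda_2\cap\lambda_3$, which would collapse the node count of $\Xset$ from $21$ to $19$. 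This projective-incidence argument is the heart of the $n=5$ case and is entirely absent from your proposal; the ``near-symmetry'' you invoke has no mechanism for ruling out the one surviving user. Until you supply an argument of this kind (or an equivalent), the proof is incomplete.
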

\begin{proof}
Consider a $GC_n$ set with exactly $n-1$ maximal lines.  In this case we have the representation \eqref{01OOO}, i.e., $\Xset:=\Xset^\times\cup\Xset'''\cup \{O_1,O_2,O_3\},$ satisfying the conditions of  Proposition \ref{prp:n-1max}. Here $\Xset^\times$ is the set of all intersection nodes of the maximal lines, $\Xset'''$ is the set of the remaining ``free" nodes in the maximal lines, and $O_1,O_2,O_3$ are the three nodes outside the maximal lines. Let $\ell$ be an $(n-1)$-node line.

First notice that, according to the hypothesis of Proposition, all the nodes of the line $\ell$ are ``free" nodes. Therefore $\ell$ does not coincide with any $OO$ line, i.e., line passing through two outside nodes.

From Proposition \ref{prp:n-1max} we have that all the ``free" nodes except the three special nodes $A_1^1,A_2^2,A_3^3,$ which we will call here $(s)$ nodes, belong to the three $OO$ lines.
We have also, in view of Proposition \ref{prp:n-1max}, (iii), that the nodes $A_1^1,A_2^2,A_3^3$ are not collinear. Therefore there are three possible cases:
\vspace{-1.5mm} \begin{enumerate} \setlength{\itemsep}{0mm}
\item
$\ell$ does not pass through any $(s)$ node,
\item
$\ell$ passes through two $(s)$ nodes,
\item
$\ell$ passes through one $(s)$ node.
\end{enumerate}
In the first case $\ell$ may pass only through nodes lying in three $OO$ lines.
Then it may pass through at most three nodes, i.e., $n\le 4.$ Therefore, in view of the hypothesis $n\ge 4,$ we get $n=4.$ Since $\mu(\Xset)=3$ we get, in view of Proposition \ref{prop:3max}, part ``Furthermore", that $\Xset_\ell=\emptyset.$

Next, consider the case when  $\ell$ passes through two $(s)$ nodes. Then, according to Proposition \ref{prp:n-1max}, (iii), it passes through an outside $O$ node. Recall that this case is excluded since $\ell$ passes through ``free" nodes only.

Finally, consider the third case when $\ell$ passes through exactly one $(s)$ node.
Then it may pass through at most three other ``free" nodes lying in $OO$ lines. Therefore $\ell$ may pass through at most four nodes.

First suppose that $\ell$ passes through exactly $3$ nodes. Then again we obtain that
$n=4$ and  $\Xset_\ell=\emptyset.$

Next suppose that $\ell$ passes through exactly $4$ nodes.
Then we have that $n=5.$ Without loss of generality we may assume that the (s) node $\ell$ passes through is, say, $A_1^1.$
Next let us show first that $|\Xset_\ell|\le 1.$ Here we have exactly $4=n-1$ maximal lines.
Consider the maximal line $\lambda_4$ for which, in view of Proposition \ref{prp:n-1max}, the intersection with each $OO$ line is a node in $\Xset$ (see Fig. \ref{pic1}).
Denote $B:=\ell\cap \lambda_4.$ Assume that the node $B$ belongs to the line $\in \ell_i^{oo},\ 1\le i\le 3,$ i.e., the line passing through $\{O_1, O_2,O_3\}\setminus \{O_i\}$ ($i=2$ in Fig. \ref{pic1}).  According to the condition (ii) of Proposition \ref{prp:n-1max} we have that $\ell_i^{oo}\cap \lambda_i\notin \Xset$.
Denote $C:=\lambda_i\cap \lambda_4.$
\begin{figure}[ht] 
\centering
\includegraphics[scale=0.5]{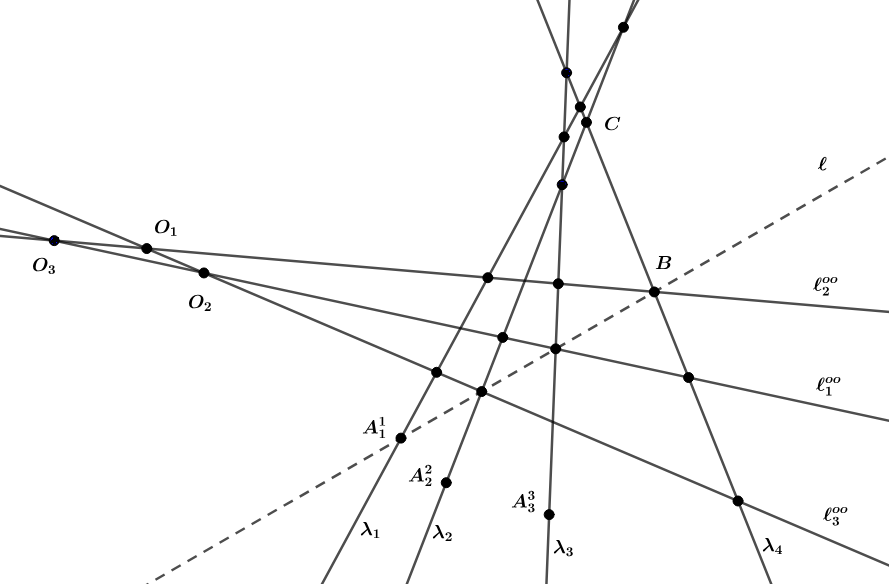}
\caption{The node set $\Xset$ for $n=5.$}\label{pic1}
\end{figure}
Now let us prove that $\Xset_\ell\subset \{C\}$ which implies $|\Xset_\ell|\le 1.$

Consider the $GC_4$ set $\Xset_i=\Xset\setminus \lambda_i.$ Here we have two maximal lines: $\lambda_4$ and $\ell_i^{oo},$
intersecting at the node $B\in \ell.$ Therefore we conclude from Lemma \ref{lem:CG2} that no node from these two maximal lines uses $\ell$ in $\Xset_2.$  Thus, in view of \eqref{rep}, no node from $\lambda_4,$ except possibly $C,$ uses $\ell$ in $\Xset.$ Now  consider the $GC_4$ set $\Xset_4=\Xset\setminus \lambda_4.$
Observe, on the basis of the characterization of Proposition \ref{prp:n-1max}, that $\Xset_4$ has exactly $3$ maximal lines.
On the other hand here the line $\ell$ intersects each maximal line at a node.
Therefore, in view of Proposition \ref{prop:3max}, part "Furthermore", we have that ${(\Xset_4)}_\ell=\emptyset.$
Hence, in view of \eqref{rep}, we conclude that $\Xset_\ell\subset \{C\}.$

Now, to complete the proof it suffices to show that the node $C$ does not use $\ell.$ Let us determine which lines uses $C.$ Since $C=\lambda_i\cap \lambda_4$ first of all it uses the two  maxmal lines $\{\lambda_1,\lambda_2,\lambda_3\}\setminus \{\lambda_i\}.$ It is easily seen that the next two lines $B$ uses are $OO$ lines: $\{\ell_1^{oo},\ell_2^{oo},\ell_3^{oo}\}\setminus\{\ell_i^{oo}\}.$ Now notice that, the two nodes, except $C$, which do not belong to the four used lines are $B$ and the (s) node $A_i^i.$ Hence the fifth line $B$ uses is the line passing through the latter two nodes. This line coincides with $\ell$ if and only if $i=1.$

In the final and most interesting part of the proof we will show that the case $i=1,$ when the node $B$ uses the line $\ell,$ i.e., the case when the (s) node $\ell$ passes is $A_1^1$ and $B\in  \ell_1^{oo},$ where $B=\ell\cap\lambda_4,$ is impossible.
\begin{figure}[ht] 
\centering
\includegraphics[scale=0.6]{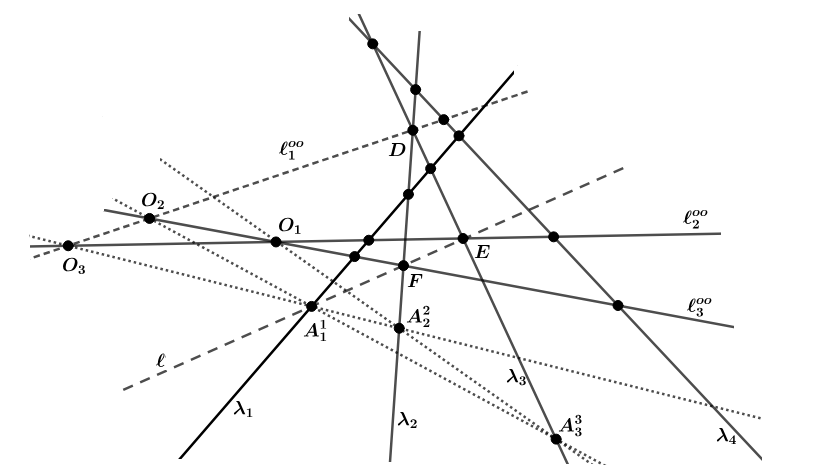}
\caption{The case $i=1.$}\label{pic12}
\end{figure}
More precicely, we will do the following (see Fig. \ref{pic12}). By assuming that
\begin{enumerate}
\item
the maximal lines $\lambda_i, i=1,\ldots,4,$ are given, the three outside nodes $O_1,O_2,O_3$ are given,
\item
the two $OO$ lines $\ell_2^{oo},\ell_3^{oo}$ are intersecting the three maximal lines at $6$ distinct points, and
\item
the three conditions in Proposition \ref{prp:n-1max}, part ``Moreover" are satisfied,   i.e., the line through the two special nodes $\{A_1^1,A_2^2,A_3^3\}\setminus A_i^i\}$  passes through the outside node $O_i$ for each $i=1,2,3,$
\end{enumerate}
we will prove that the third $OO$ line $\ell_1^{oo}$  passes necessarily through the the node $D,$ i.e., intersection node of the two maximal lines $\lambda_2$ and $\lambda_3.$ Since this makes the number of all nodes of the set $\Xset$ only $19$ instead of $21,$ we may conclude that the abovementioned case is impossible.

For this end, to simplify the  Fig. \ref{pic12} let us delete from it the maximal lines $\lambda_1$ and $\lambda_4$ to obtain the following Fig. \ref{pic15}.
\begin{figure}[ht] 
\centering
\includegraphics[scale=0.6]{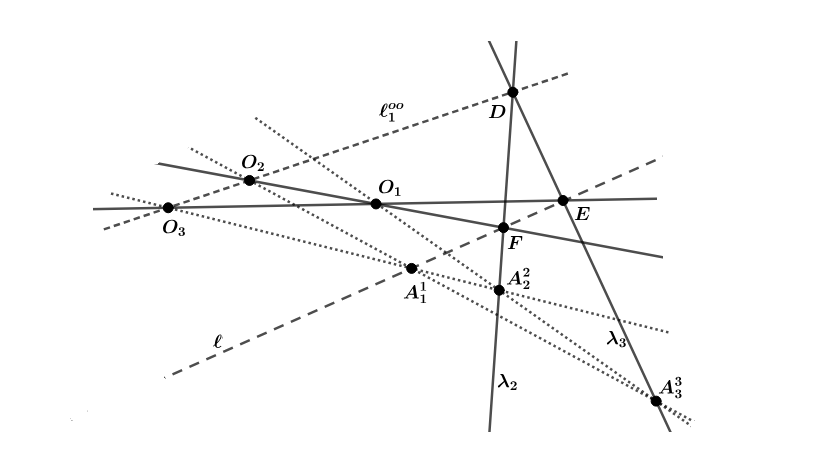}
\caption{The $(9_3)$ configuration with $3$ triangles.}\label{pic15}
\end{figure}
Let us now apply the well-known Pappus hexagon theorem for the pair of triple collinear nodes here
$$A_1^1,\ \ E,\ \ F;$$   $$O_1,\ \ A_2^2,\ \ A_3^3.$$
Now observe that $$\ell(A_1^1,A_2^2) \cap \ell(E,O_1)=O_3,\ \ell(E,A_3^3) \cap \ell(F,A_2^2)=D,\ \ell(A_1^1,A_3^3) \cap \ell(F,O_1)=O_2,$$
where $\ell(A,B)$ denotes the line passing through the points $A$ and $B.$ 
Thus, according to the Pappus theorem we get that the triple of nodes $D,O_2,O_3$ is collinear.
\end{proof}

Notice that in Fig. \ref{pic15} we have a $(9_3)$ configuration of 9 lines and 9 points (three triangles) that occurs with each line meeting 3 of the points and each point meeting 3 lines (see \cite{HCV}, Chapter 3, Section 17).
\begin{figure}[ht] 
\centering
\includegraphics[scale=0.45]{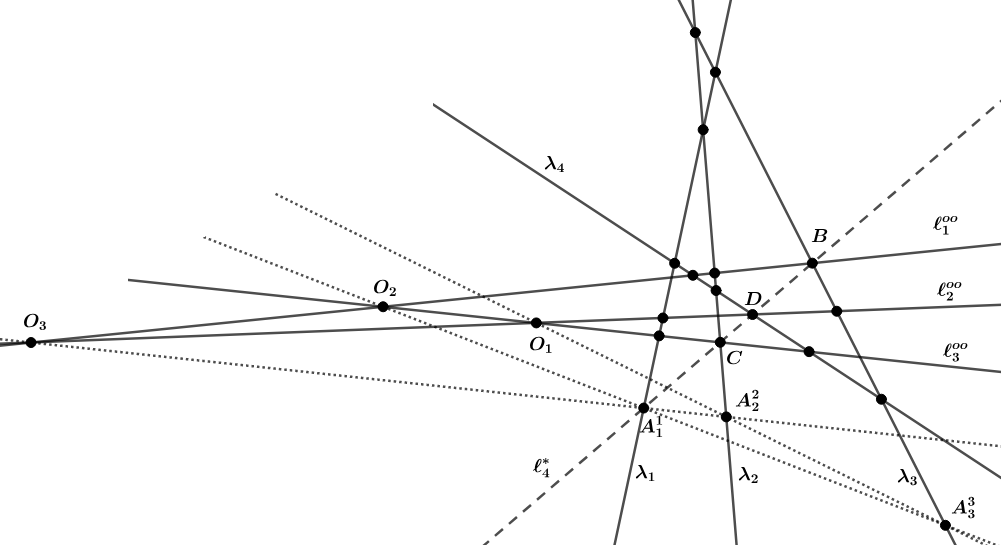}
\caption{A non-used $4$-node line $\ell_4^*$ in a $GC_5$ set $\Xset^*$}\label{pic11}
\end{figure}
\begin{remark} \label{rm} Let us show that  the case of non-used $4$-node line in  Figure \ref{pic1} is possible nevertheless. The problem with this is that we have to confirm that the three conditions in Proposition \ref{prp:n-1max}, are satisfied. More precicely:
\begin{enumerate}
\item
$\ell_i^{oo}\cap \lambda_i=\emptyset$ for each $i=1,2,3;$
\item 
The line through the two special nodes $\{A_1^1,A_2^2,A_3^3\}\setminus A_i^i\}$  passes through the outside node $O_i$ for each $i=1,2,3.$ 
\end{enumerate}
Let us outline how one can get a desired figure (see Fig. \ref{pic11}). Let us start the  figure with the three maximal lines (non-concurrent) $\lambda_1, \lambda_2,\lambda_3.$ Then we choose two $OO$ lines lines $\ell_1^{oo}, \ell_3^{oo}$ through the outside node $O_2,$ which intersect the three maximal lines at $6$ distinct points. Next we get the line $\ell_4^*$ which passes through the points $B$ and $C,$ where $B=\ell_1^{oo}\cap \lambda_3$ and $C=\ell_3^{oo}\cap \lambda_2.$
Now we find the point $A_1^1:=\ell_4^*\cap \lambda_1.$ By intersecting the line through $O_2$ and $A_1^1$ with the maximal line $\lambda_3$ we get the node $A_3^3.$ Then we choose a ``free node" $A_2^2$ on the maximal line $\lambda_2.$ This enables to determine the two remaining outside nodes: $O_1, O_3.$ Namely we have that $O_1$ is the intersection point of  
the line through $A_2^2$ and $A_3^3$ with the line $\ell_3^{oo}$ and $O_3$ is the intersection point of  
the line through $A_2^2$ and $A_1^1$ with the line $\ell_1^{oo}.$  Thus we get the line $\ell_2^{oo}$ which passes through the outside nodes $O_1, O_3.$
Next we get the points of intersection of the line  $\ell_2^{oo}$ with the three maximal lines  as well as the point of intersection with the line $\ell_4^*$ denoted by $D.$
Now, we choose the maximal line $\lambda_4$ passing through $D$ and intersecting the remaining maximal lines and  the three $OO$ lines at $6$ new distinct nodes.
Finally, all the specified intersection points in  Fig. \ref{pic11} we declare as the nodes of  $\Xset^*.$\end{remark}

\section{Proof of Conjecture \ref{mainc} \label{ss:conj}}

Let us start the proof with a list of the major cases in which Conjecture \ref{mainc} is  true.

\vspace{1.5mm}

\noindent \emph{Step 1.} The Conjecture \ref{mainc} is true in the following cases:
\vspace{-1.5mm} \begin{enumerate} \setlength{\itemsep}{0mm}
\item
The line $\ell$ is a maximal line.

Indeed, as we have mentioned already, in this case we have $\Xset_\ell=\Xset\setminus \ell$ and all the conclusions of Conjecture can be readily verified.

\item
The line $\ell$ is an $n$-node line, $n\in \mathbb N$.

In this case Conjecture \ref{mainc} is valid by virtue of Theorem \ref{th:corrected} (for $n\in \mathbb N\setminus \{3\}$) and Proposition \ref{prp:n=3} (for $n=3$).

\item
The line $\ell$ is a $2$-node line.

In this case Conjecture \ref{mainc} follows from the statement \eqref{2nodel} and the adjoint statement next to it.
\end{enumerate}

Now, let us prove Conjecture by complete induction on $n$ - the degree of the node set $\Xset.$
Obviously Conjecture is true in the cases $n=1,2.$
Note that this follows also from Step 1 (i) and (ii).

Assume that Conjecture is true for any node set of degree not exceeding $n-1.$ Then let us prove that it is true for the node set $\Xset$ of degree $n.$
Suppose that we have a $k$-node line $\ell.$
\vspace{1.5mm}

\noindent \emph{Step 2:} Suppose additionally that there is an $\ell$-disjoint maximal line $\lambda.$
Then we get from Lemma \ref{lem:CG1} that
\begin{equation}\label{abb}\Xset_{\ell} = {(\Xset \setminus \lambda)}_{\ell}.
\end{equation}
Therefore by using the induction hypothesis for the $GC_{n-1}$ set
$\Xset':=\Xset \setminus \lambda$ we get the relation \eqref{1bbaa}, where $\sigma':=\sigma(\Xset',\ell)\le s \le k$ and $\sigma'=2k-(n-1)-1=2k-n=\sigma+1.$
Here we use also Proposition \ref{proper} in checking that $\Xset_\ell$ is an $\ell$-proper subset of $\Xset.$

Now let us verify the part "Moreover". Suppose that $\sigma=2k-n-1\ge 3,$ i.e. $2k\ge n+4,$ and $\mu(\Xset)>3.$ For the line $\ell$ in the $GC_{n-1}$ set $\Xset_0$  we have $\sigma'=\sigma+1\ge 4.$ Thus if $\mu(\Xset')>3$ then, by the induction hypothesis, we have that $(\Xset')_\ell\neq\emptyset.$ Therefore we get, in view of \eqref{abb}, that $\Xset_\ell\neq\emptyset.$
It remains to consider the case $\mu(\Xset')=3.$
In this case, in view of Proposition \ref{prp:CG-1}, we have that $\mu(\Xset)=4,$ which, in view of Theorem \ref{th:CGo10}, implies that $4\in \{n-1,n,n+1,n+2\},$ i.e., $2\le n\le 5.$

The case $n=2$ was verified already.
Now, since $2k\ge n+4$ we deduce that either
$k\ge 4$ if $n=3, 4,$ or $k\ge 5$ if $n=5.$
All these cases follow from Step 1 (i) or (ii).

The part "Furthermore" follows readily from the relation \eqref{abb}.

\vspace{1.5mm}

\noindent \emph{Step 3:} Suppose additionally that there is a pair of $\ell$-adjacent maximal lines $\lambda', \lambda''.$
Then we get from Lemma \ref{lem:CG2} that
\begin{equation}\label{bbac}
\Xset_{\ell} = {(\Xset \setminus (\lambda' \cup \lambda''))}_{\ell}.
\end{equation}

Therefore by using the induction hypothesis for the $GC_{n-2}$ set
$\Xset'':=\Xset \setminus (\lambda' \cup \lambda''))$ we get the relation \eqref{1bbaa},  where $\sigma'':=\sigma(\Xset'',\ell)\le s \le k-1$ and $\sigma''=2(k-1)-(n-2)-1=\sigma.$
Here we use also Proposition \ref{proper} to check that $\Xset_\ell$ is an $\ell$-proper subset of $\Xset.$

Let us verify the part "Moreover". Suppose that $\sigma=2k-n-1\ge 3$ and $\mu(\Xset)>3.$ The line $\ell$ is $(k-1)$-node line in the $GC_{n-2}$ set $\Xset''$ and we have that $\sigma''=\sigma\ge 3.$ Thus if $\mu(\Xset'')>3$ then, by the induction hypothesis, we have that $(\Xset'')_\ell\neq\emptyset$ and therefore we get, in view of \eqref{bbac},  that $\Xset_\ell\neq\emptyset.$
It remains to consider the case $\mu(\Xset'')=3.$
Then, in view of Proposition \ref{prp:CG-1}, we have that $\mu(\Xset)=4$ or $5,$ which, in view of Theorem \ref{th:CGo10}, implies that $4\ \hbox{or}\ 5\in \{n-1,n,n+1,n+2,\}$ i.e., $2\le n\le 6.$

The cases  $2\le n\le 5$ were considered in the previous step.
Thus suppose that $n=6.$
Then, since $2k\ge n+4,$ we deduce that
$k\ge 5.$ In view of Step 1, (ii), we may suppose that $k=5.$

Now the set $\Xset''$ is a $GC_4$  and the line $\ell$ is a $4$-node line there. Thus, in view of Step 1 (ii) we have that
$(\Xset'')_\ell\neq \emptyset.$ Therefore we get, in view of \eqref{bbac}, that
 $\Xset_\ell\neq \emptyset.$

The part "Furthermore" follows readily from the relation \eqref{bbac}.

\vspace{1.5mm}

\noindent \emph{Step 4.} Now consider any $k$-node line $\ell$  in a $GC_n$ set $\Xset.$ In view of Step 1 (iii) we may assume that $k\ge 3.$ In view of Theorem \ref{th:CGo10} and Proposition \ref{prop:3max} we may assume also that $\mu(\Xset)\ge n-1.$

Next suppose that $k\le n-2.$ Since then $\mu(\Xset)>k$ we necessarily
have either the situation of Step 2 or Step 3.

Thus we may assume that $k\ge n-1.$ Then, in view of Step 1 (i) and (ii), it remains to consider the case $k =n-1,$ i.e., $\ell$ is an $(n-1)$-node line. Again if $\mu(\Xset)\ge n$ then we necessarily
have either the situation of Step 2 or Step 3. Therefore we may assume also that $\mu(\Xset)=n-1.$ By the same argument we may assume that each of the $n-1$ nodes of the line $\ell$  is an intersection node with one of the $n-1$ maximal lines.

Therefore the conditions of Proposition \ref{prp:n-1} are satisfied and we arrive to the two cases: $n=4, k=3,  \sigma=1$ or $n=5, k=4,  \sigma=2.$
In both cases we have that $\Xset_\ell=\emptyset.$ Thus in this case Conjecture is true.
\hfill $\square$

\subsection{The \label{ss:count} characterization of the case $\sigma=2,\ \mu(\Xset) >3$}

Here we bring, for each $n$ and $k$ with $\sigma=2k-n-1 =2,$
two constructions of $GC_n$ set and a nonused $k$-node line there. At the end (see forthcoming Proposition \ref{charact}) we prove that these are the only constructions with the mentioned property.

Let us start with a counterexample in the case $n=k=3$ (see \cite{HV}, Section 3.1), i.e., with  a $GC_3$ set $\Yset^*$ and a $3$-node line $\ell_3^*$ there which is not used.

\begin{figure}[ht] 
\centering
\includegraphics[scale=0.6]{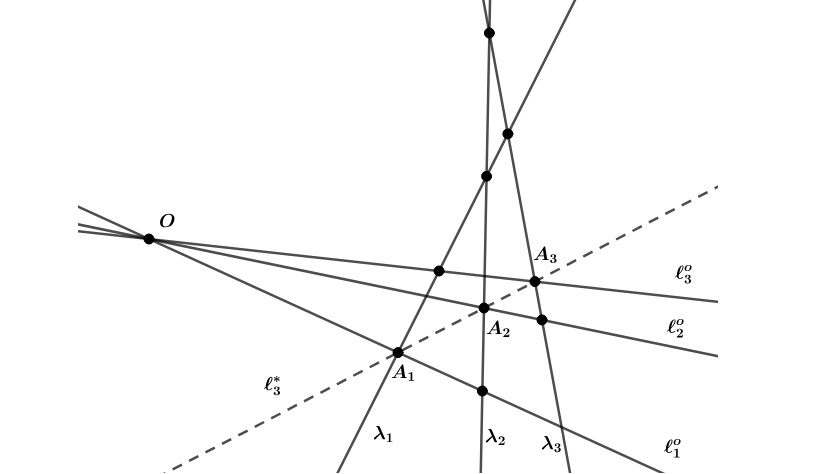}
\caption{A non-used $3$-node line $\ell_3^*$ in a $GC_3$ set $\Yset^*$}\label{pic2}
\end{figure}

Consider a $GC_3$ set $\Yset^*$ of $10$ nodes with exactly three maximal lines: $\lambda_1,\lambda_2,\lambda_3$ (see Fig. \ref{pic2}). This set has construction \eqref{01O} and satisfies the conditions listed in Proposition \ref{prp:nmax}. Now observe that the $3$-node line $\ell_3^*$ here intersects all the three maximal lines at nodes. Therefore, in view of Proposition \ref{prp:n=3}, (iii), the line $\ell_3^*$ cannot be used by any node in $\Yset^*,$ i.e., $$(\Yset^*)_{\ell_3^*}=\emptyset.$$

Let us outline how one can get Fig. \ref{pic2}. We start the mentioned figure with the three lines $\ell_1^o, \ell_2^o,\ell_3^o$ through $O,$ i.e., the outside node. Then we choose the maximal lines $\lambda_1,\lambda_2,$ intersecting $\ell_1^o, \ell_2^o$ at $4$ distinct points. Let $A_i$ be the intersection point $\lambda_i\cap\ell_i^o,\ i=1,2.$  We chose the points $A_1$ and $A_2$ such that the line through them: $\ell_3^*$  intersects the line $\ell_3^o$ at a point $A_3.$ Next we choose a third maximal line $\lambda_3$ passing through $A_3.$ Let us mention that we chose the maximal lines such that they are not concurrent and intersect the three lines through $O$ at nine distinct points. Finally, all the specified intersection points in  Fig. \ref{pic2}  we declare as the nodes of  $\Yset^*.$

In the general case of $\sigma=2k-n-1=2$ we set $k=m+3$ and obtain $n=2m+3,$ where $m=0,1,2,\ldots$. Let us describe how the previous $GC_3$ node set 
$\Yset^*$ together with the $3$-node line $\ell_3^*$ can be modified to $GC_n$ node set $\bar{\Xset}^*$ with a $k$-node line $\bar{\ell}_3^*$
in a way to fit the general case. That is to have that  $(\bar{\Yset^*})_{\bar{\ell_3}}=\emptyset.$
\begin{figure}[ht] 
\centering
\includegraphics[scale=0.5]{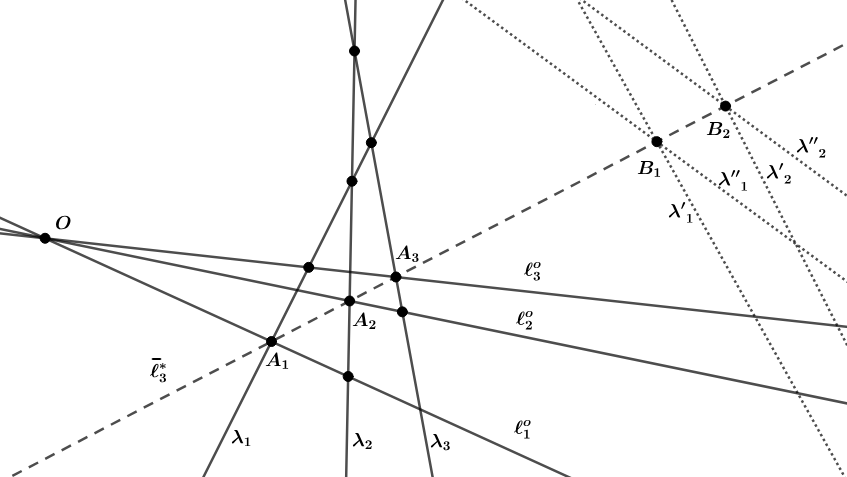}
\caption{A non-used $k$-node line $\bar{\ell}_3^*$ in a $GC_n$ set $\bar{\Yset}^*$ with $\sigma=2\ (m=2)$\label{pic3}}
\end{figure}

For this end we just leave the line $\ell_3^*$ unchanged, i.e., $\bar{\ell}_3^*\equiv\ell_3^*$ and extend the set $\Yset^*$ to a $GC_n$ set $\bar{\Yset}^*$ in the following way. We fix $m$ points: $B_i, i=1,\ldots,m,$ in $\ell_3^*$ different from $A_1,A_2,A_3$ (see Fig. \ref{pic3}). Then we add $m$ pairs of (maximal) 
lines $\lambda_i',\lambda_i'',  i=1,\ldots,m,$ intersecting at these $m$ points respectively: $$\lambda_i'\cap \lambda_i'' = B_i,\ i=1,\ldots,m.$$

We assume that the following condition is satisfied:
\begin{enumerate}
\item
The $2m$ lines $\lambda_i',\lambda_i'', i=1,\ldots,m,$ together with $\lambda_1,\lambda_2,\lambda_3$ are in general position, i.e., no two lines are parallel and no three lines are
concurrent;
\item
The mentioned $2m+3$ lines intersect the lines $\ell_1^o, \ell_2^o,\ell_3^o$ at distinct $3(2m+3)$ points.
\end{enumerate}
Now all the points of intersections of the $2m+3$ lines $\lambda_i',\lambda_i'', i=1,\ldots,m,$ together with $\lambda_1,\lambda_2,\lambda_3$ are declared as the nodes of the set $\bar{\Yset}^*.$  Next for each of the lines $\lambda_i',\lambda_i'', i=1,\ldots,m,$ also two from the three intersection points with the lines $\ell_1,\ell_2,\ell_3,$ are declared as (``free") nodes.
After this the lines  $\lambda_1,\lambda_2,\lambda_3$ and the lines $\lambda_i',\lambda_i'', i=1,\ldots,m,$ become  $(2m+4)$-node lines, i.e., maximal lines.

Now one can verify readily that the set  $\bar{\Yset}^*$ is a $GC_n$ set since it satisfies the construction \eqref{01O} with $n=2m+3$ maximal lines: $\lambda_i',\lambda_i'', i=1,\ldots,m,$ together with $\lambda_1,\lambda_2,\lambda_3,$ and satisfies the conditions listed in Proposition \ref{prp:nmax}.

Finally, in view of Lemma \ref{lem:CG2} and the relation \eqref{bbb}, applied $m$ times with respect to the pairs $\lambda_i',\lambda_i'', i=1,\ldots,m,$ gives:
${(\bar{\Yset}^*)}_{\bar{\ell}_3^*}=(\Yset^*)_{\ell_3^*}=\emptyset.$

Let us call the set $\bar{\Yset}^*$ an $m$-modification of the set $\Yset^*.$ In the same way we could define $\bar{\Xset}^*$ as an $m$-modification of the set
$\Xset^*$ from Fig. \ref{pic1}, with the $4$-node non-used line $\ell_4^*$ (see Remark \ref{rm}).  The only differences from the previous case here are:
\begin{enumerate}
\item
 Now $k=m+4,\ n=2m+5, m=0,1,2,\ldots$ (again $\sigma=2$);
\item
We have $2n+4$ maximal lines: $\lambda_i',\lambda_i'', i=1,\ldots,m,$ and the lines $\lambda_1,\lambda_2,\lambda_3,\lambda_4;$
\item
Instead of the lines  $\ell_1^o, \ell_2^o,\ell_3^o$  we have the lines $\ell_{1}^{oo}, \ell_{2}^{oo}, \ell_{3}^{oo};$
\item
For each of the lines $\lambda_i',\lambda_i'', i=1,\ldots,m,$ all three intersection points with the lines $\ell_1^{oo},\ell_2^{oo},\ell_3^{oo},$ are declared as (``free") nodes.
\end{enumerate}
Now one can verify readily that the set  $\bar{\Xset}^*$ is a $GC_n$ set since it satisfies the construction \eqref{01OOO} with $n=2m+4$ maximal lines,
 and satisfies the conditions listed in Proposition \ref{prp:n-1max}.

Thus we obtain another construction of non-used $k$-node lines in $GC_n$ sets, with $\sigma=2,$ where $k=m+4, n=2m+5.$

At the end let us prove the following
\begin{proposition}\label{prp:sigma2}\label{charact}
Let $\Xset$ be a $GC_n$ set and $\ell$ be a $k$-node line with $\sigma:=2k-n-1=2$ and $\mu(\Xset)>3.$
Suppose that the line $\ell$ is a non-used line. Then we have that either $\Xset=\bar{\Xset}^*, \ \ell={\bar{\ell}_4^*},$ or $\Xset=\bar{\Yset}^*, \ \ell={\bar{\ell}_3^*}.$
\end{proposition}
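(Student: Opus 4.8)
The plan is to strip $\ell$-adjacent pairs of maximal lines off $\Xset$ until $\ell$ becomes short, and then to recognise the resulting core as one of the two base sets $\Yset^*,\Xset^*$ constructed above. First I would fix the number of maximal lines. By Theorem \ref{th:CGo10} we have $\mu(\Xset)\in\{3,n-1,n,n+1,n+2\}$, and $\mu=3$ is excluded by hypothesis. In a Chung--Yao lattice ($\mu=n+2$) every non-maximal $k$-node line satisfies $2k\le n+2$, i.e.\ $\sigma\le1$, contradicting $\sigma=2$; in a Carnicer--Gasca lattice ($\mu=n+1$) Proposition \ref{CGl} gives $s\ge\sigma=2$, hence $|\Xset_\ell|=\binom{s}{2}\ge1$, contradicting that $\ell$ is unused. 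Thus $\mu(\Xset)\in\{n-1,n\}$, which already matches the two target families ($\bar{\Yset}^*$ carries $n$ maximal lines, $\bar{\Xset}^*$ carries $n-1$).

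Next I would show that no maximal line is $\ell$-disjoint. If $\lambda$ were, then $\Xset_\ell=(\Xset\setminus\lambda)_\ell$ by Lemma \ref{lem:CG1}, while on the $GC_{n-1}$ set $\Xset\setminus\lambda$ the line $\ell$ has $\sigma+1=3$; the already established Conjecture \ref{mainc} then forces $(\Xset\setminus\lambda)_\ell\ne\emptyset$ as soon as $\mu(\Xset\setminus\lambda)>3$, contradicting $\Xset_\ell=\emptyset$, and the residual value $\mu(\Xset\setminus\lambda)=3$ pushes $n$ down to the base degree $5$ via Proposition \ref{prp:CG-1} and Theorem \ref{th:CGo10}. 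Consequently every maximal line meets $\ell$ at a node of $\Xset$, and by Corollary \ref{properties} each node of $\ell$ carries at most two maximal lines; writing $a,b,c$ for the numbers of nodes of $\ell$ lying on two, one and no maximal lines, we get $2a+b=\mu(\Xset)$ and $a+b+c=k$.

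Now comes the reduction. Each multiplicity-two node supplies an $\ell$-adjacent pair, which I delete by Lemma \ref{lem:CG2}; by \eqref{bbb} this keeps $\ell$ unused, lowers $n$ by $2$, removes one node of $\ell$, and preserves $\sigma=2$. Iterating until $\ell$ has no multiplicity-two node produces a core $\Xset_t$, a $GC_{n_t}$ set with an unused $t$-node line and $n_t=2t-3$; reaching $t=2$ is impossible, since on the resulting $GC_1$ triangle the $2$-node line is the fundamental polynomial of the third vertex and hence used. Because a deletion removes exactly two maximal lines, $\mu$ drops by at most $2$ per step, so $\mu(\Xset_t)\ge\mu(\Xset)-(n-n_t)$; on the other hand, the no-$\ell$-disjoint property at $\Xset_t$ together with the absence of multiplicity-two nodes gives $\mu(\Xset_t)\le t=(n_t+3)/2$. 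For $\mu(\Xset)=n$ these combine into $n_t\le(n_t+3)/2$, i.e.\ $n_t=3$, and Proposition \ref{prp:n=3}(iii) identifies $\Xset_t=\Yset^*$; for $\mu(\Xset)=n-1$ they give $n_t\in\{3,5\}$, and excluding $n_t=3$ (which would exhibit $\Xset$ as a modification of $\Yset^*$ and force $\mu(\Xset)=n$) leaves $n_t=5$ with $\Xset_t=\Xset^*$ by Proposition \ref{prp:n-1} and Remark \ref{rm}.

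It remains to reassemble $\Xset$. The deleted pairs are $\ell$-adjacent pairs of maximal lines of $\Xset$ meeting $\ell$ at distinct nodes, so restoring them to the core is precisely an $m$-modification; the requirement that all free nodes lie on the three concurrent lines $\ell_i^o$ (resp.\ the three $OO$ lines $\ell_i^{oo}$) follows by applying Proposition \ref{prp:nmax} (resp.\ \ref{prp:n-1max}) to $\Xset$ itself, whose outside node(s) and structural lines survive the deletions. Hence $\Xset=\bar{\Yset}^*$ with $\ell=\bar{\ell}_3^*$ when the core is $\Yset^*$, and $\Xset=\bar{\Xset}^*$ with $\ell=\bar{\ell}_4^*$ when the core is $\Xset^*$. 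I expect the genuine difficulty to be concentrated in two places: securing $n_t=5$ in the case $\mu(\Xset)=n-1$, which amounts to proving that the pair-deletions create no spurious maximal lines, so that $\mu$ drops by exactly $2$ at each step and the family is preserved; and the identification of the degree-$5$ core with $\Xset^*$, which rests on the Pappus incidence argument of Proposition \ref{prp:n-1} rather than on any formal reduction. Controlling the $\ell$-disjoint lines uniformly at every stage, where the borderline value $\mu=3$ after a deletion must be dispatched through Propositions \ref{prop:3max} and \ref{prp:n-1}, is the remaining technical point.
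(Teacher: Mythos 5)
Your proposal is correct and follows essentially the same route as the paper's proof: eliminating $\ell$-disjoint maximal lines via the already-established Conjecture \ref{mainc} (with the endgame through Proposition \ref{prp:CG-1}, Theorem \ref{th:CGo10} and Theorem \ref{th:corrected}), peeling off $\ell$-adjacent pairs via Lemma \ref{lem:CG2}, using the count $\mu\le k$ against $\mu\ge n-1$ to force termination at degree $3$ or $5$, and identifying the resulting cores through Propositions \ref{prp:n=3} and \ref{prp:n-1}/\ref{prp:n-1max}. The only real difference is organizational --- you unwind the paper's induction on $n$ into an explicit descent to a core and run the maximal-line count there (which additionally needs, and correctly uses, the fact that undeleted maximal lines stay maximal) --- and your final reassembly into an $m$-modification is asserted at the same level of detail as the paper's ``we conclude readily''.
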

\begin{proof} Notice that $n$ is an odd number and $n\ge 3.$ In the case $n=3$ Proposition \ref{prp:sigma2} follows from Proposition \ref{prp:n=3}.

Thus suppose that $n\ge 5.$ Since $\mu(\Xset)>3,$ we get, in view of Theorem \ref{th:CGo10}, that
\begin{equation}\label{n-1}\mu(\Xset)\ge n-1.
\end{equation}

Next, let us prove that there is no $\ell$-disjoint maximal line $\lambda$ in $\Xset.$

Suppose conversely that $\lambda$ is a maximal line with $\lambda\cap \ell\notin\Xset.$
Denote by $\Xset':=\Xset\setminus \lambda.$ Since $\Xset_\ell=\emptyset$ therefore, by virtue of the relation \eqref{rep}, we obtain that $(\Xset')_\ell=\emptyset.$
Then we have that
$\sigma':=\sigma(\Xset',\ell)=2k-(n-1)-1=3.$
By taking into account the latter two facts, i.e., $(\Xset')_\ell=\emptyset$ and $\sigma'=3,$  we conclude from Conjecture \ref{mainc}, part ``Moreover", that $\mu(\Xset')=3.$
Next, by using \eqref{n-1} and Proposition \ref{prp:CG-1}, we obtain that that $\mu(\Xset)=4.$ By applying again \eqref{n-1} we get that $4\ge n-1,$
i.e., $n\le 5.$ Therefore we arrive to the case: $n=5.$ Since $\sigma=2$ we conclude that $k=4.$
Then observe that the line $\ell$ is $4$-node line in the $GC_4$ set $\Xset'$. By using  Theorem \ref{th:corrected} we get that $(\Xset')_\ell\ne\emptyset,$ which is a contradiction.

Now let us prove Proposition in the case $n=5.$ As we mentioned above then $k=4.$ We have that there is no $\ell$-disjoint maximal line. Suppose also that there is no  pair of $\ell$-adjacent maximal lines. Then, in view of \eqref{n-1}, we readily get that $\mu(\Xset)= 4$ and through each of the four nodes of the line $\ell$ there passes a maximal line. Therefore,  Proposition \ref{prp:n-1max} yields that $\Xset$ coincides with $\Xset^*$ (or, in other words, $\Xset$ is a $0$-modification of $\Xset^*$) and $\ell$ coincides with $\ell_4^*.$

Next suppose that there is a  pair of $\ell$-adjacent maximal lines: $\lambda',\lambda''.$ Denote by $\Xset'':=\Xset\setminus (\lambda'\cup\lambda'').$ Then we have that
$\ell$ is a $3$-node non-used line in $\Xset''.$ Thus we conclude readily that $\Xset$ coincides with $\bar{\Yset}^*$ and $\ell$ coincides with ${\bar\ell}_3^*$ (with $m=1$).

Now let us continue by using induction on $n.$ Assume that Proposition is valid for all degrees up to $n-1.$ Let us prove it in the case of the degree $n.$ We may suppose that $n\ge 7.$
It suffices to prove that there is a  pair of $\ell$-adjacent maximal lines: $\lambda',\lambda''.$ Indeed, in this case we can complete the proof just as in the case $n=5.$

Suppose by way of contradiction that there is no pair of $\ell$-adjacent maximal lines. Also we have that there is no $\ell$-disjoint maximal line. Therefore we have that
$\mu(\Xset) \le k.$ Now, by using \eqref{n-1}, we get that $k\ge n-1.$ Therefore $2=\sigma=2k-n-1\ge 2n-2-n-1=n-3.$
This implies that $n-3\le 2,$ i.e., $n\le 5,$ which is a contradiction.
\end{proof}

\vspace{3mm}


\noindent \emph{Hakop Hakopian} \vspace{2mm}

\noindent{Department of Informatics and Applied Mathematics\\
Yerevan State University\\
A. Manukyan St. 1\\
0025 Yerevan, Armenia}

\vspace{1mm}
\noindent E-mail: $<$hakop@ysu.am$>$

\vspace{5mm}

\noindent \emph{Vahagn Vardanyan} \vspace{2mm}

\noindent{Institute of Mathematics\\
National Academy of Sciences of Armenia\\
24/5 Marshal Baghramian Ave. \\
0019 Yerevan, Armenia}

\vspace{1mm}

\noindent E-mail:$<$vahagn.vardanyan@gmail.com$>$
\end{document}